\newtheorem{theorem}{Theorem}[section]
\newtheorem{lemma}[theorem]{Lemma}
\newtheorem{proposition}[theorem]{Proposition}
\newtheorem{conjecture}[theorem]{Conjecture}
\newtheorem{remark}[theorem]{Remark}
\newtheorem{claim}{Claim}
\newcommand{\mrho}{\gamma_2^2}
\newcommand{\smallqed}{{\tiny ($\Box$)}}
\DeclareMathOperator{\diam}{diam}
\DeclareMathOperator{\rad}{rad}
\DeclareMathOperator{\ecc}{ecc}
\newcommand{\F}{\mathcal{F}}
\newcommand{\vertex}{\node[vertex]}
\tikzstyle{vertex}=[circle, draw, inner sep=0pt, minimum size=6pt]
\begin{document}

\title{The $d$-distance $p$-packing domination number: complexity, cycles, and trees}

\date{}

\author{
Csilla Bujt\'as $^{a,b,}$\thanks{Email: \texttt{csilla.bujtas@fmf.uni-lj.si}}
\and
Vesna Ir\v si\v c Chenoweth $^{a,b,}$\thanks{Email: \texttt{vesna.irsic@fmf.uni-lj.si}}
\and
Sandi Klav\v zar $^{a,b,c,}$\thanks{Email: \texttt{sandi.klavzar@fmf.uni-lj.si}}
\and
Gang Zhang $^{b,d,}$\thanks{Email: \texttt{gzhang@stu.xmu.edu.cn}}
}

\maketitle

% \vspace*{-15mm}
\begin{center}
$^a$ Faculty of Mathematics and Physics, University of Ljubljana, Slovenia\\
\medskip

$^b$ Institute of Mathematics, Physics and Mechanics, Ljubljana, Slovenia\\
\medskip

$^c$ Faculty of Natural Sciences and Mathematics, University of Maribor, Slovenia\\
\medskip

$^d$ School of Mathematical Sciences, Xiamen University, China\\
\medskip
\end{center}

\begin{abstract}
A set of vertices $X\subseteq V(G)$ is a $d$-distance dominating set if for every $u\in V(G)\setminus X$ there exists $x\in X$ such that $d(u,x) \le d$, and $X$ is a $p$-packing if $d(u,v) \ge p+1$ for every different $u,v\in X$. The $d$-distance $p$-packing domination number $\gamma_d^p(G)$ of $G$ is the minimum size of a set of vertices of $G$ which is both a $d$-distance dominating set and a $p$-packing. It is proved that for every two fixed integers $d$ and $p$ with $2 \le d$ and $0 \le p \leq 2d-1$, the decision problem whether $\gamma_d^p(G) \leq k$ holds is NP-complete for bipartite planar graphs. A necessary and sufficient condition for the existence of a $d$-distance $p$-packing dominating set in $C_n$ is obtained and $\gamma_d^p(C_n)$ determined for every $d$, $p$, and $n$. For a tree $T$ on $n$ vertices with $\ell$ leaves and $s$ support vertices it is proved that (i) $\gamma_2^0(T) \geq \frac{n-\ell-s+4}{5}$, (ii) $\left \lceil \frac{n-\ell-s+4}{5} \right \rceil \leq \gamma_2^2(T) \leq \left \lfloor \frac{n+3s-1}{5} \right \rfloor$, and if $d \geq 2$, then (iii) $\gamma_d^2(T) \leq \frac{n-2\sqrt{n}+d+1}{d}$. Inequality (i) improves an earlier bound due to Meierling and Volkmann, and independently Raczek, Lema\'nska, and Cyman, while (iii) extends an earlier result for $\gamma_2^2(T)$ due to Henning. Sharpness of the bounds are discussed and established in most cases. It is also proved that every connected graph $G$ contains a spanning tree $T$ such that $\gamma_2^2(T) \leq \gamma_2^2(G)$.
\end{abstract}

\noindent
{\bf Keywords:} $d$-distance dominating set; $p$-packing set; dominating set; tree; planar graph

\medskip\noindent
{\bf AMS Subj.\ Class.\ (2020)}: 05C69, 05C05, 68R10

%%%%%%%%%%%%%%%%%%%%%%%%%%%%%%%%%
\section{Introduction}
\label{sec:intro}
%%%%%%%%%%%%%%%%%%%%%%%%%%%%%%%%%

Let $G = (V(G), E(G))$ be a graph, and let $d(u,v)$ denote the shortest-path distance in $G$ between vertices $u, v\in V(G)$. Let further $d$ and $p$ be nonnegative integers and $X\subseteq V(G)$. Then $X$ is a {\em $d$-distance dominating set} of $G$ if for every vertex $u\in V(G)\setminus X$ there exists a vertex $x\in X$ such that $d(u,x) \le d$, and $X$ is a {\em $p$-packing} of $G$ if $d(u,v) \ge p+1$ for every different vertices $u,v\in X$. (Note that the set $X$ is a $1$-packing if and only if $X$ is an independent set.) The \emph{ $d$-distance $p$-packing domination number} $\gamma_d^p(G)$ of $G$ is the minimum size of a set of vertices of $G$ which is both $d$-distance dominating set and $p$-packing. If $d <p$, such a set might not exist, in which case we set $\gamma_d^p(G) = \infty$. 

The $d$-distance $p$-packing domination number was introduced back in 1994 by Beineke and Henning~\cite{Beineke-1994} under the name {\em $(p,d)$-domination number} and with the notation $i_{p,d}(G)$. However, as this, with the exception of~\cite{Gimbel-1996}, has not been used subsequently, we have opted for the above terminology and notation with the intention of placing this general concept within the trends of contemporary graph domination theory. We now describe a number of important special cases.

\begin{itemize}
\item $\gamma_1^0(G)$ is the usual domination number $\gamma(G)$ of $G$. More generally, $\gamma_d^0(G)$ is studied in the literature under the name of {\em $d$-distance domination number} of $G$, often denoted by $\gamma_d(G)$, see the survey~\cite{henning-2020}.
\item $\gamma_1^1(G)$ is the {\em independent domination number}  $i(G)$ of $G$, which is one of the core concepts in domination theory~\cite{Haynes-2023}. More generally, $\gamma_d^1(G)$, is the {\em $d$-distance independent domination number} of $G$, denoted by $id(d,G)$ in~\cite{Gimbel-1996, henning-2020}. 
\item $\gamma_2^2(G)$ is the {\em lower packing number of $G$}, which can be equivalently described as the minimum cardinality of a maximal $2$-packing of $G$, see~\cite{Henning-1998-1}, where it is denoted by $\rho_L(G)$. More generally, $\gamma_d^d(G)$ has been investigated for the first time by Henning, Oellermann, and Swart in~\cite{Henning1991} under the name {\em $d$-independent $d$-domination number} and denoted by $i_d(G)$. In this article, we will pay considerable attention to the $d$-distance $2$-packing domination number (of trees), hence we refer to the following selected papers~\cite{Bozovic-2022, Fisher-1994, Junosza-2012, Trejo-2023} that deal with different aspects of $2$-packings.  
\end{itemize}
Note that $\gamma_0^0(G) = |V(G)|$ and that if $p\ge 1$, and $G$ is connected with at least two vertices, then $\gamma_0^p(G) = \infty$. In general, for every $d \ge 0$ and $p \ge 2d+1$, if $G$ is a connected graph with a radius $\rad(G) >d$, then $\gamma_d^p(G) = \infty$. The folloing result also follows from the definition. 

\begin{proposition}
\label{prop:lower-bound-k-domination}
If $0 \leq d'\leq d$ and $0\le p\le p'$, then $\gamma_{d'}^{p'}(G) \geq \gamma_d^p(G)$.
\end{proposition}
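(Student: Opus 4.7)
The plan is a direct monotonicity argument on a single optimal set, with a brief check of the degenerate case when one side is infinite. The key observation is that the two conditions defining $\gamma_d^p(G)$ decouple: the $d$-distance dominating condition is monotone increasing in $d$, and the $p$-packing condition is monotone decreasing in $p$. So widening the allowed dominating radius and tightening the packing requirement happen in opposite directions, which is exactly the content of the inequality.

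More concretely, I would first dispose of the case $\gamma_{d'}^{p'}(G) = \infty$, where the inequality is vacuous. Otherwise, let $X\subseteq V(G)$ be an optimal set realizing $\gamma_{d'}^{p'}(G)$, so $|X| = \gamma_{d'}^{p'}(G)$, $X$ is a $d'$-distance dominating set of $G$, and $X$ is a $p'$-packing of $G$. I claim $X$ is both a $d$-distance dominating set and a $p$-packing of $G$. For domination: given any $u\in V(G)\setminus X$, by hypothesis there exists $x\in X$ with $d(u,x)\le d'$; since $d'\le d$, the same $x$ satisfies $d(u,x)\le d$. For packing: for distinct $u,v\in X$, we have $d(u,v)\ge p'+1\ge p+1$ since $p\le p'$. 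Hence $X$ is an admissible set for the parameters $(d,p)$, and therefore $\gamma_d^p(G)\le |X| = \gamma_{d'}^{p'}(G)$.

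There is really no obstacle here; the only mild subtlety is keeping the direction of each monotonicity straight, since the two parameters move in opposite ways in the inequality. Once that is stated clearly, the whole argument is two lines.
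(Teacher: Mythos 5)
Your argument is correct and is exactly the reasoning the paper has in mind when it states that the proposition ``follows from the definition'': any optimal $(d',p')$-set is automatically a $d$-distance dominating set and a $p$-packing under the stated inequalities. The handling of the $\infty$ case is a sensible extra precaution but raises no issues.
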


\subsection{Notation}

In this brief subsection, we collect additional definitions needed.  

Let $G = (V(G), E(G))$ be a graph and $k \geq 0$ an integer. The \emph{neighborhood} $N(v)$ of a vertex $v \in V(G)$ contains the neighbors of $v$. The \emph{closed neighborhood} $N[v]$ of $v$ contains $v$ and its neighbors. We also define 
$$N_k[v]= \{u \in V(G): d(u,v) \leq k\}\,.$$ 
The \emph{eccentricity} of a vertex $v \in V(G)$ is $\ecc(v) = \max \{ d(v, x): x \in V(G) \}$. The \emph{radius} $\rad(G)$ and the \emph{diameter} $\diam(G)$ of $G$ are, respectively, the minimum and the maximum eccentricity among vertices of $G$. If $d\ge 1$, then a set $X\subseteq V(G)$ is a $d$-perfect code if for every $u\in V(G)$ there exists a unique $x\in V(G)$ such that $u\in N_d[x]$, cf.~\cite{taylor-2009}. We say that a set $D\subseteq V(G)$ is a $\gamma_d^p$-set of $G$ if $D$ is a $d$-distance dominating set and a $p$-packing with $|D| = \gamma_d^p(G)$.

Let $T$ be a tree. A vertex of degree 1 in $T$ is called a \emph{leaf} and its neighbor is called a \emph{support vertex}; let $S(T)$ denote the set of the support vertices of  $T$. If $uv \in E(T)$, then $T-uv$ has two components; the one containing $u$ is denoted with $T_u$ while the one containing $v$ is $T_v$. When it will be clear from the context, $n_x$, $\ell_x$, and $s_x$ will denote the number of vertices, the number of leaves, and the number of support vertices in $T_x$, respectively. 

\subsection{Our results}

In Section~\ref{sec:complexity} we prove that for every two fixed integers $d$ and $p$ with $2 \le d$ and $0 \le p \leq 2d-1$, the decision problem whether $\gamma_d^p(G) \leq k$ holds is NP-complete over the class of bipartite planar graphs. Further, if $p=2d$, then the problem is NP-complete over the class of planar graphs.  (For the remaining values of $d$ and $p$, the algorithmic time complexity is known.) The NP-hardness over the class of bipartite graphs was already established in~\cite{Fricke} for $d=p \ge 3$, but in the case of $d=p=2$ the proof contains a mistake that we correct here. The main result of Section~\ref{sec:cycles} establishes  the exact value of $\gamma_d^p(C_n)$ for every $d$, $p$, and $n$. It includes a necessary and sufficient condition for the existence of a $d$-distance $p$-packing dominating set in $C_n$. The exact value of $\gamma_d^p(P_n)$ is also determined in all cases.

In the subsequent sections, we focus on trees. Let ${\cal T}_d$ be the set of trees in which leaves are pairwise at distance $2d\ (\bmod\ {2d+1})$, that is, 
$${\cal T}_d = \{ T:\ T \text{ tree},\ d(x,y) \equiv 2d\ (\bmod\ {2d+1}) \text{ for every different leaves } x, y \text{ in } T\}\,.$$
Meierling and Volkmann~\cite{Meierling-2005}, and independently Raczek, Lema\'nska, and Cyman~\cite{Raczek-2006} proved the following result, extending the earlier result for $d = 1$ from~\cite{Lemanska-2004}.

\begin{theorem} {\rm \cite{Meierling-2005, Raczek-2006}}
\label{thm:k-distance-in-trees}
If $d\ge 1$ and $T$ is a tree on $n$ vertices and with $\ell$ leaves, then 
$$\gamma_d^0(T) \geq \frac{n - d \ell + 2d}{2d+1}\,.$$
Moreover, equality holds if and only if $T\in {\cal T}_d$.
\end{theorem}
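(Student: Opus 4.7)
The plan is to prove both the inequality and the equality characterization by induction on $n = |V(T)|$, tracking both simultaneously.

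\textbf{Base case.} I would first dispose of trees with $\diam(T) \le 2d$. Any central vertex $c$ then dominates $T$, so $\gamma_d^0(T) = 1$, and the claimed inequality reduces to $n - 1 \le d\ell$. Rooting $T$ at $c$ and counting root-to-leaf paths edge by edge, one has
$$
n - 1 = |E(T)| \le \sum_{\lambda \text{ leaf}} d(c, \lambda) \le d\ell,
$$
since every edge lies below at least one leaf and every leaf sits at distance $\le d$ from $c$. Equality forces each edge to lie below a unique leaf (so $T$ is a spider centered at $c$) with each of the $\ell$ legs of length $d$; such spiders belong to ${\cal T}_d$ because pairwise leaf distances equal $2d$.

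\textbf{Inductive step.} For $\diam(T) \ge 2d+1$ I would fix a longest path $P\colon v_0 v_1 \cdots v_k$ and let $i$ be the smallest index with $\deg_T(v_i) \ge 3$, or $i = k$ if $P = T$. The longest-path property forces each branch at $v_j$ ($j < i$) to have depth at most $j$. The core of the proof is the reduction
$$
\gamma_d^0(T) = \gamma_d^0(T') + 1,
$$
where $T'$ arises from $T$ by deleting a carefully chosen pendant substructure of exactly $2d+1$ vertices containing the initial segment of $P$. The upper bound $\gamma_d^0(T) \le \gamma_d^0(T') + 1$ follows by adjoining the dominator $v_d$ to any minimum $\gamma_d^0$-set of $T'$. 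For the matching lower bound I would establish an exchange lemma: some minimum $\gamma_d^0$-set $D$ of $T$ intersects the peeled set exactly in $\{v_d\}$; this uses the inclusion $N_d[v_j] \subseteq N_d[v_d]$ for $0 \le j \le 2d$ when the initial portion of $P$ is branching-free, with an adapted redirection in the branching case. Then $D \setminus \{v_d\}$ is a $d$-distance dominating set of $T'$ of size $|D| - 1$. Applying the induction hypothesis to $T'$ with $n' = n - (2d+1)$ and $\ell' \in \{\ell - 1, \ell\}$ closes the step.

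\textbf{Equality characterization.} The delicate point is maintaining equality through the induction. Comparing the formulas for $T$ and $T'$, equality propagates precisely when $\ell' = \ell$, equivalently when $v_{2d+1}$ becomes a new leaf of $T'$; this in turn forces the peeled set to be a pure pendant path of length $2d+1$. Unwinding the induction, equality forces $T$ to be built from a spider with $\ell$ legs of length $d$ by successively attaching pendant paths of length $2d+1$ to existing leaves. Such a construction preserves the congruence $d(x,y) \equiv 2d \pmod{2d+1}$ on leaf pairs, since $(2d+1) + 2d \equiv 2d \pmod{2d+1}$, so the resulting $T$ lies in ${\cal T}_d$. Conversely, for $T \in {\cal T}_d$ I would argue that a length-$(2d+1)$ pendant path is always available to peel, reducing to a smaller element of ${\cal T}_d$ and iterating to an explicit optimal $d$-distance dominating set. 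The technical heart, and main obstacle, lies in the case $d \le i \le 2d$, where no branching-free initial segment of length $2d+1$ is available and both the peeling and the exchange lemma must be verified through a structural analysis of the branches at $v_i$.
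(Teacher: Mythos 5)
First, note that the paper does not actually prove this theorem: it is quoted from~\cite{Meierling-2005, Raczek-2006}, and the method used there (mirrored in the paper's own proof of the analogous Theorem~\ref{thm:lower-bound-trees-2-domination} in Section~\ref{sec:proof1}) is an edge-splitting lemma --- if $\gamma_d^0(T)\ge 2$ there is an edge $uv$ with $\gamma_d^0(T)=\gamma_d^0(T_u)+\gamma_d^0(T_v)$ --- followed by induction on $\gamma_d^0$, using $n_u+n_v=n$ and $\ell_u+\ell_v\le \ell+2$ and the superadditivity of the bound. Your plan is genuinely different: peel a pendant block of $2d+1$ vertices and induct on $n$. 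Your base case and the branching-free peel (where $N_d[v_j]\subseteq N_d[v_d]$ for $j\le d$ lets you normalize a minimum set and delete $v_0,\dots,v_{2d}$) are correct. The difficulty is that the entire weight of the proof rests on the branching case that you explicitly defer, and there the scheme as stated does not merely need more work --- its key claims are false.

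Concretely: (1) a pendant substructure of exactly $2d+1$ vertices containing the initial segment of $P$ need not satisfy $\gamma_d^0(T)=\gamma_d^0(T')+1$. For $d=2$ take the path $v_0\cdots v_9$ with an extra leaf $w$ attached to $v_2$; the only pendant $5$-vertex set containing $v_0$ is $\{v_0,v_1,v_2,v_3,w\}$, its removal leaves $P_6$, and $\gamma_2^0(T)=\gamma_2^0(T')=2$. (Here peeling from the other end of $P$ rescues the argument, but that escape is not always available.) (2) For the equality direction, your claim that a pendant path of length $2d+1$ ``is always available to peel'' in a tree of ${\cal T}_d$ is false: take two spiders, each with three legs of length $d$, and join their centers by a path of length $2d+1$. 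This tree lies in ${\cal T}_d$ and attains equality ($n=8d+2$, $\ell=6$, $\gamma_d^0=2$), yet every pendant path has length at most $d$, since each leaf is within distance $d$ of a vertex of degree at least $3$; your induction cannot start on this tree. Both failures occur exactly in the regime $d\le i\le 2d$ that you label ``the technical heart,'' so what remains unwritten is not a routine verification but the actual content of the proof, and the reduction would have to be redesigned there (different peel sizes, different accounting of $\ell$). This is precisely the complication that the edge-splitting approach of the cited proofs avoids.
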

Setting 
$$\F_2 = \{ T:\ T \text{ tree},\ d(x,y) \equiv 2\ (\bmod\ {5}) \text{ for every } x, y \in S(T), x\ne y\} \setminus \{K_{1,n}:\ n \geq 2\}\,,$$
we improve Theorem~\ref{thm:k-distance-in-trees} for the case $d=2$ as follows. 

\begin{theorem}
    \label{thm:lower-bound-trees-2-domination}
    If $T$ is a tree on $n$ vertices with $\ell$ leaves and $s$ support vertices, then $$\gamma_2^0(T) \geq \frac{n-\ell-s+4}{5}\,.$$ 
    Moreover, equality holds if and only if $T\in \F_2$.
\end{theorem}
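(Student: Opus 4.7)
I will deduce the lower bound from Theorem~\ref{thm:k-distance-in-trees} by a leaf-reduction that collapses every support vertex down to a single leaf neighbor, and then track this reduction in both directions to settle the equality characterization. First dispose of the stars $K_{1,n}$ separately: here $\gamma_2^0(K_{1,n}) = 1 > 4/5 = (n-\ell-s+4)/5$, so the inequality is strict (consistent with $K_{1,n}\notin \F_2$). Now let $T$ be a non-star tree. If some support vertex $u$ has two distinct leaves $w_1, w_2$, set $T' = T - w_1$. Because $T$ is not a star, $u$ has a non-leaf neighbor, so $u$ remains a non-leaf support vertex of $T'$; consequently $(n', \ell', s') = (n-1, \ell-1, s)$, and the quantity $(n-\ell-s+4)/5$ is preserved.

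The core claim is $\gamma_2^0(T') \leq \gamma_2^0(T)$. Given a minimum $2$-distance dominating set $D$ of $T$, if $w_1 \notin D$ then $D$ already $2$-dominates $T'$; otherwise, replace $w_1 \in D$ by $u$. The key structural fact is
\[
N_2[w_1] \subseteq \{w_1\} \cup N_2[u],
\]
because $w_1$ is a leaf whose only neighbor is $u$; thus every vertex of $V(T') = V(T) \setminus \{w_1\}$ remains $2$-dominated after the swap. Iterating, I reach a tree $T_m$ in which every support vertex has exactly one leaf, so $\ell_m = s_m = s$ and $n_m = n - (\ell - s)$. Applying Theorem~\ref{thm:k-distance-in-trees} with $d=2$ to $T_m$ then gives
\[
\gamma_2^0(T) \geq \gamma_2^0(T_m) \geq \frac{n_m - 2\ell_m + 4}{5} = \frac{n - \ell - s + 4}{5}.
\]

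For the equality characterization I argue in both directions. If equality holds for $T$, then both inequalities above are tight, so $\gamma_2^0(T_m) = (n_m - 2\ell_m + 4)/5$; by the equality case of Theorem~\ref{thm:k-distance-in-trees}, $T_m \in {\cal T}_2$. Writing $w_i$ for the retained leaf at the support vertex $u_i$, the identity $d_{T_m}(w_i, w_j) = d_T(u_i, u_j) + 2$ converts the leaf-distance condition of ${\cal T}_2$ into the support-distance condition $\equiv 2 \pmod 5$ that defines $\F_2$; together with $T \neq K_{1,n}$ (which equality rules out), this gives $T \in \F_2$. Conversely, if $T \in \F_2$, the same identity shows $T_m \in {\cal T}_2$, so $\gamma_2^0(T_m) = (n-\ell-s+4)/5$. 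Finally, $\gamma_2^0(T) \leq \gamma_2^0(T_m)$ follows because any $D' \subseteq V(T_m)$ that $2$-dominates the retained leaf $w_i$ within $T_m$ must do so via some $x \in \{w_i\} \cup N[u_i]$, and hence $d_T(x, w') \leq 2$ for every removed leaf $w'$ at $u_i$.

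The main obstacle is the bookkeeping for the equality case, in particular translating the leaf-distance condition of ${\cal T}_2$ into the support-distance condition of $\F_2$ and verifying the reverse inequality $\gamma_2^0(T) \leq \gamma_2^0(T_m)$; the key technical input throughout is the containment $N_2[w_1] \subseteq \{w_1\} \cup N_2[u]$, after which the argument reduces to a direct application of Theorem~\ref{thm:k-distance-in-trees}.
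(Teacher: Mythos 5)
Your proof is correct, but it takes a genuinely different route from the paper. The paper reproves the bound from scratch by induction on $\gamma_2^0(T)$, splitting $T$ along an edge $uv$ with $\gamma_2^0(T)=\gamma_2^0(T_u)+\gamma_2^0(T_v)$ (Lemma~2.1 of Meierling--Volkmann), and for the equality characterization it develops a bespoke refinement (Lemma~\ref{lem:remove-edge}) guaranteeing that both sides of the split stay in $\F_2$. You instead reduce directly to the already-known Theorem~\ref{thm:k-distance-in-trees}: strip superfluous leaves so that each support vertex keeps exactly one, observe via $N_2[w_1]\subseteq\{w_1\}\cup N_2[u]$ that this does not decrease (in fact does not change) $\gamma_2^0$, and then the arithmetic $n_m-2\ell_m+4=n-\ell-s+4$ together with the distance identity $d_{T_m}(w_i,w_j)=d_T(u_i,u_j)+2$ transports both the bound and the equality class ${\cal T}_2$ to $\F_2$. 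Your approach is shorter and makes transparent \emph{why} the new bound refines the old one (it is the old bound applied to the leaf-reduced tree), at the price of using the equality case of Theorem~\ref{thm:k-distance-in-trees} as a black box; the paper's self-contained induction avoids that dependence. Two small points you should make explicit: the leaf-stripping step preserves the property of not being a star (the set of non-leaf vertices is unchanged, so the hypothesis needed at each iteration persists), and the displayed value $4/5$ in your star case presumes $n\ge 2$ with a non-leaf center, which is exactly the excluded family $\{K_{1,n}:n\ge 2\}$; neither affects correctness.
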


We then bound the $2$-distance $2$-packing domination number of trees as follows.

\begin{theorem}
    \label{thm:2-2-lower-upper}
    If $T$ is a tree on $n \ge 2$ vertices with $\ell$ leaves and $s$ support vertices, then 
    $$\left \lceil \frac{n-\ell-s+4}{5} \right \rceil \leq \mrho(T) \leq \left \lfloor \frac{n+3s-1}{5} \right \rfloor\,.$$
\end{theorem}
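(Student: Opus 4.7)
The plan is to argue the two inequalities separately. The lower bound follows by combining Proposition~\ref{prop:lower-bound-k-domination} and Theorem~\ref{thm:lower-bound-trees-2-domination}: applying Proposition~\ref{prop:lower-bound-k-domination} with $d'=d=2$, $p=0$, and $p'=2$ gives $\mrho(T) \geq \gamma_2^0(T)$, and Theorem~\ref{thm:lower-bound-trees-2-domination} then yields $\gamma_2^0(T) \geq (n-\ell-s+4)/5$; since $\mrho(T)$ is an integer, the right-hand side can be replaced by its ceiling.

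For the upper bound, I plan to proceed by induction on $n$, constructing a set $D \subseteq V(T)$ that is both a $2$-packing and a $2$-distance dominating set of $T$ with $|D| \le \lfloor(n+3s-1)/5\rfloor$. The base cases treat small trees: in particular, if $\diam(T)\le 4$, then a vertex of eccentricity at most $2$ by itself forms a valid $\mrho$-set, so $\mrho(T)=1$, and a short direct check gives $\lfloor(n+3s-1)/5\rfloor\ge 1$ whenever $n\ge 2$. For the inductive step, fix a longest path $v_0v_1\ldots v_k$ in $T$ with $k\ge 5$. The longest-path assumption forces every neighbor of $v_1$ other than $v_2$ to be a leaf, and every neighbor of $v_2$ other than $v_3$ to lie in a subtree of depth at most $2$ rooted at $v_2$. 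The strategy is to add $v_1$ (or $v_2$ in some subcases) to $D$, delete a carefully chosen subtree $S$ containing $v_0$, and apply the induction hypothesis to $T' := T \setminus S$. Since every vertex of $S$ lies within distance $2$ of the vertex chosen for $D$, while every vertex of $T'$ lies at distance at least $3$ from it, merging the chosen vertex with any $2$-packing $2$-distance dominating set of $T'$ yields one for $T$.

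The main obstacle is the pruning step. A naive removal of just $\{v_0,v_1\}$ decreases $n+3s-1$ by as little as $2$, which is in general insufficient to force $\lfloor(n+3s-1)/5\rfloor$ to drop by one. A case analysis is therefore needed, split according to the degree of $v_1$ (how many leaves hang off it) and the structure of the depth-at-most-$2$ subtrees rooted at $v_2$. In each subcase, the subtree $S$ must be chosen so that the drop in $n+3s-1$, together with the creation or destruction of support vertices in $T'$, guarantees that $\lfloor(n+3s-1)/5\rfloor$ decreases by at least one; this may require extending $S$ along the path up to $v_2$ or $v_3$, or including sibling subtrees hanging off $v_2$. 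The detailed bookkeeping of how deleting $S$ turns interior vertices of $T$ into leaves of $T'$, and hence their neighbors into new support vertices, is where most of the technical work concentrates.
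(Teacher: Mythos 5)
Your lower bound is exactly the paper's argument (Proposition~\ref{prop:lower-bound-k-domination} with $d'=d=2$, $p=0$, $p'=2$, combined with Theorem~\ref{thm:lower-bound-trees-2-domination} and integrality), and your overall strategy for the upper bound --- induction on $n$, pruning a subtree at the $v_0$-end of a longest path and adding one vertex to a $\mrho$-set of the remainder --- is also the paper's. However, the proposal stops precisely where the proof has to begin. You announce that ``a case analysis is therefore needed'' and that the choice of the deleted subtree $S$ and the bookkeeping of $n+3s$ is ``where most of the technical work concentrates,'' but you do not carry any of it out. That case analysis \emph{is} the proof of the upper bound, so as written there is a genuine gap. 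For the record, the paper's split is by the degrees of $v_2,v_3,v_4$ (not of $v_1$): one deletes $T(v_1)$, $T(v_2)$, $T(v_3)$, or $T(v_4)$ according as the first vertex among $v_2,v_3,v_4$ of degree at least $3$ is $v_2$, $v_3$, $v_4$, or none, adding $v_0$, $v_0$, $v_1$, or $v_2$ respectively; in each case the deletion removes at least $2,3,4,5$ vertices and exactly one support vertex (except possibly the last case), which is what makes $n'+3s'-1\le n+3s-6$ and closes the induction. Your sketch only contemplates extending $S$ up to $v_2$ or $v_3$; the case $\deg(v_2)=\deg(v_3)=\deg(v_4)=2$ forces you to go up to $v_4$, and the degenerate subcase $\deg(v_5)=1$ must be settled by a direct computation.

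A second, smaller but real error: the claim that ``every vertex of $T'$ lies at distance at least $3$ from'' the vertex you add is false. If you place $v_1$ in $D$ and delete $S=T(v_1)$, then $v_2\in V(T')$ is at distance $1$ from $v_1$, and nothing prevents $v_2$ from lying in the $\mrho$-set $D'$ of $T'$. The correct mechanism, which the paper uses implicitly, is that a set is a $2$-distance dominating $2$-packing if and only if it is a maximal $2$-packing: either $D'$ already $2$-distance dominates the deleted part (e.g.\ because $v_2\in D'$), in which case $D'$ itself works for $T$, or it does not, in which case the vertex you add (chosen deep enough in $S$, such as $v_0$) is automatically at distance at least $3$ from every vertex of $D'$ precisely because $D'$ fails to dominate it. Your argument needs this dichotomy, not a blanket distance claim.
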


Theorems~\ref{thm:lower-bound-trees-2-domination} and~\ref{thm:2-2-lower-upper} are respectively proved in Sections~\ref{sec:proof1} and~\ref{sec:proof2}. 

In \cite{Gimbel-1996}, Gimbel and Henning proved that if $G$ is a connected graph of order $n \geq d+1$, then $\gamma_d^1(G) \leq \frac{n-2\sqrt{n}+d+1}{d}$ and the bound is sharp. In \cite{Henning-1998-1}, Henning proved that $\mrho(T) \leq \frac{n-2\sqrt{n}+3}{2}$ for all trees $T$ of order $n \geq 3$ and that the bound is sharp. The following theorem generalizes Henning's result to all $d \geq 2$, as well as partially generalizes the result of \cite{Gimbel-1996}.

\begin{theorem}
\label{thm:trees-upper-bound-gamma_d^2}
If $d \geq 2$ and $T$ is a tree of order $n$, then 
$$\gamma_d^2(T) \leq \frac{n-2\sqrt{n}+d+1}{d}\,.$$
\end{theorem}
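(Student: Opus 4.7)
Plan. I propose proving this by induction on $n$. For the base case, if $\rad(T)\le d$, then a center of $T$ by itself constitutes a $d$-distance $2$-packing dominating set, so $\gamma_d^2(T)=1$ and the bound $1\le \frac{n-2\sqrt{n}+d+1}{d}$ reduces to $(\sqrt{n}-1)^2\ge 0$. So we may assume $\diam(T)\ge 2d+1$.

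For the inductive step, fix a longest path $P=v_0v_1\cdots v_m$ in $T$, root $T$ at $v_m$, and consider the subtree $T_{v_d}$ rooted at $v_d$. By the longest-path property, any branch at $v_i$ with $0\le i\le d$ has depth at most $i$, so every vertex of $T_{v_d}$ is within distance $d$ of $v_d$. The plan is to place $v_d$ into the prospective set $D$ and pass to the forest $F=T-V(T_{v_d})-\{v_{d+1},v_{d+2}\}$ (with vertices understood to be removed only if they exist in $T$). The removed vertices $v_{d+1}$ and $v_{d+2}$ lie at distance $1$ and $2$ from $v_d$ respectively, so they are dominated (crucially using $d\ge 2$); excluding them from $F$ guarantees that any subsequent addition to $D$ drawn from $F$ is at distance at least $3$ from $v_d$, preserving the $2$-packing condition. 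Applying the induction hypothesis to the components of $F$ then yields $\gamma_d^2(T)\le 1+\frac{n''-2\sqrt{n''}+d+1}{d}$, where $n''=|V(F)|$.

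The key inductive inequality to verify is $2(\sqrt{n}-\sqrt{n''})\le n-n''-d$, which is comfortable whenever the peeled subtree $T_{v_d}$ is of appreciable size. The main obstacle has two aspects. First, the boundary case when $T_{v_d}$ is the bare path $v_0\cdots v_d$ with no side branches, so that $n-n''$ is only $d+1$ or $d+2$; in this regime the $2\sqrt{n}$ correction genuinely binds, reflecting the branching structure of the extremal examples. Second, the forest $F$ may be disconnected after removal of $v_{d+1}$ and $v_{d+2}$, introducing an additive overhead of roughly $\frac{(k-1)(d+1)}{d}$ when applying the hypothesis to each of $k$ components. I anticipate that both difficulties are resolved by a refined peeling step: either exploit a vertex of degree at least $\sqrt{n}$ as a hub whose $d$-ball absorbs a large portion of $T$ (making $n-n''$ large), or use an alternative decomposition that keeps the remainder connected when $T$ is nearly path-like, so that the overhead does not accumulate. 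This case analysis, analogous in spirit to the one underlying the Gimbel--Henning bound on $\gamma_d^1$ from \cite{Gimbel-1996} and to Henning's treatment of $\gamma_2^2$ in \cite{Henning-1998-1}, is the technical heart of the proof.
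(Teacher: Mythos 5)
Your proposal is a plan rather than a proof: the two difficulties you name at the end (the bare\hyp{}path peeling step, and the accumulation of overheads over the components of the disconnected remainder $F$) are exactly the points where the induction breaks, and you leave both unresolved (``I anticipate that both difficulties are resolved\dots''). Concretely, your key inequality $2(\sqrt n-\sqrt{n''})\le n-n''-d$ rewrites as $\sqrt n+\sqrt{n''}\ge \frac{2(n-n'')}{n-n''-d}$; when $T_{v_d}$ is a bare path, so that $n-n''=d+3$ after also deleting $v_{d+1},v_{d+2}$, this becomes $\sqrt n+\sqrt{n''}\ge\frac{2(d+3)}{3}$ and fails for every tree whose order is small compared with $d^2$. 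Likewise, applying the hypothesis to $k$ components of $F$ yields at best $\frac{n''-2\sqrt{n''}+k(d+1)}{d}$, so each extra component costs an additive $\frac{d+1}{d}$ that the single ``$+1$'' for $v_d$ cannot absorb. There are also two local flaws: removing only $v_{d+1}$ and $v_{d+2}$ does not put every vertex of $F$ at distance at least $3$ from $v_d$ in $T$, since any child of $v_{d+1}$ other than $v_d$ and $v_{d+2}$ survives in $F$ at distance $2$ from $v_d$; and vertices selected in different components of $F$ may themselves be at distance $2$ from each other in $T$ (two such children of $v_{d+1}$, say), so the union of the recursively obtained packings need not be a $2$-packing of $T$.

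The paper avoids induction altogether. It starts from a minimum $d$-distance dominating set $D=\{v_1,\dots,v_b\}$ with the private far-vertex property of Lemma~\ref{lemma:d-distance-dominating-set}, shows that the associated $v_i,w$-paths give pairwise disjoint sets $X_i$ of size at least $d+1$, partitions $V(T)$ into parts $V_i\ni v_i$, each inducing a subtree of radius at most $d$ and order $n_i\ge d+1$, and then greedily converts $D$ into a $2$-packing by replacing $v_i$, when it lies too close to the vertices already chosen, by suitable vertices at distance $1$ or $2$ along the long branches of its part. The count $|S|\le 1+\sum_{i\ge 2}\frac{n_i-1}{d}$ together with $n_1\ge \frac{n}{b}$ and $b+\frac{n}{b}\ge 2\sqrt n$ then gives the bound. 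If you wish to salvage an inductive argument, you would need a peeling step that removes at least $d+1$ vertices per selected vertex \emph{and} simultaneously controls the component overhead and the cross-component packing constraint; the paper's global partition is precisely the device that performs this bookkeeping.
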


Theorem~\ref{thm:trees-upper-bound-gamma_d^2} is proved in Section~\ref{sec:proof3}, where it is in addition demonstrated that its bound is best possible. 

Finally, in Section~\ref{sec:spanning-trees} we prove that every connected graph $G$ contains a spanning tree $T$ such that $\mrho(T) \leq \mrho(G)$.

%%%%%%%%%%%%%%%%%%%%%%%%%%%%%%%%%%%%%%%%%
\section{Algorithmic time complexity}
\label{sec:complexity}
%%%%%%%%%%%%%%%%%%%%%%%%%%%%%%%%%%%%%%%%%

In this section, we prove the NP-completeness of the decision problem whether $\gamma_d^p(G) \leq k$ holds. Our results show that the NP-hardness is true for every two fixed integers $d$ and $p$ with $2 \le d$ and $0 \le p \leq 2d-1$ over the class of bipartite planar graphs, and also if $p=2d$ and the problem is considered for planar graphs. For the remaining values of $d$ and $p$, the algorithmic time complexity is known. If $p \ge 2d+1$, a $d$-distance $p$-packing dominating set exists in $G$ if and only if its radius is at most $d$, and then $\gamma_d^p (G) =1$ holds. The decision problem, therefore, can be solved in polynomial time if $p \ge 2d+1$. If $d=1$ and $p \in \{0,1,2\}$, then the problem is known to be NP-complete as $\gamma_d^p(G)$ corresponds to the domination number, independent domination number, and the minimum size of a $1$-perfect code in $G$. We also note that the NP-hardness over the class of bipartite graphs was already established in~\cite{Fricke} for $d=p \ge 3$. However, the proof in~\cite{Fricke} for $d=p=2$ contains a mistake that we correct here by applying a reduction from the $1$-in-$3$-SAT problem in Case~$2$ when proving Theorem~\ref{thm:bipartite-NP}. 
We first define the problems and present the constructions used in the proofs.

\paragraph{Planar 3-SAT and Planar 1-in-3-SAT problems.}
A formula $F$ is an instance of these problems if the following properties hold.
\begin{itemize}
    \item[(i)] $F$ is a 3-SAT instance $F= C_1 \wedge \cdots \wedge C_\ell$ over the Boolean variables $x_1, \dots, x_k$ and hence each clause $C_i$ is a disjunction of three literals.
    \item[(ii)] Given a 3-SAT formula $F$, we consider the graph $G^*(F)$ associated with $F$. The vertex set of $G^*(F)$ contains one vertex $c_j$ for each clause $C_j$ (clause vertices) and two vertices $x_i^+$ and $x_i^-$ for every variable $x_i$ (literal vertices). We add the edge $x_i^+x_i^-$ for every $i \in [k]$; an edge between $c_j$ and $x_i^+$ if clause $C_j$ contains the positive literal $x_i$; and an edge between $x_i^-$ and $c_j$ if $C_j$ contains the negative literal $\bar{x_i}$. If $F$ is an instance of the Planar 3-SAT or the Planar 1-in-3-SAT problem, then $G^*(F)$ is required to be a planar graph.
\end{itemize}
We say that formula $F$ is 1-in-3-satisfiable if there is a truth assignment $$\phi: \{x_1, \dots, x_k\} \rightarrow \{ \mbox{true, false}\}$$ 
such that every clause is satisfied by exactly one literal. Further, formula $F$ is satisfied by $\phi$ if every clause is satisfied by at least one literal.
\paragraph{Construction of $G_d(F)$.}  For every $d \ge 2$ and an instance $F$ of Planar 3-SAT, we modify $G^*(F)$ to obtain $G_d(F)$ as follows. For every $i \in [k]$, we take $2d$ new vertices $y_i^1, \dots , y_i^{d}, z_i^1, \dots, z_i^{d} $, such that $x_i^+y_i^1\dots y_i^d$ and $x_i^- z_i^1 \dots z_i^d$ are paths of length $d$, remove the edge $x_i^+ x_i^-$, and add the edges $x_i^+z_i^1$ and $x_i^-y_i^1$. Let $X_i$ denote the set of these $2d+2$ vertices, for $i \in [k]$.
Further, every edge $x_i^+c_j$ or $x_i^-c_j$ between a clause and a literal vertex is subdivided by $d-2$ vertices to obtain the path $P_{i,j}^+$ or $P_{i,j}^-$, respectively. We note that $G_d(F)$ is a planar bipartite graph when $F$ is a Planar 3-SAT (or Planar 1-in-3-SAT) instance. For an example see Fig.~\ref{fig:G4(F)}. 

\begin{figure}[ht!]
\begin{center}
\begin{tikzpicture}[scale=0.9,style=thick,x=1cm,y=1cm]
\def\vr{3pt}
\begin{scope}[xshift=-0cm, yshift=0cm] % 1st
% vertices defined
\coordinate(x1+) at (0.0,0.0);
\coordinate(y11) at (0.0,1.0);
\coordinate(y12) at (0.0,2.0);
\coordinate(y13) at (0.0,3.0);
\coordinate(y14) at (0.0,4.0);
\coordinate(x1-) at (1.5,0.0);
\coordinate(z11) at (1.5,1.0);
\coordinate(z12) at (1.5,2.0);
\coordinate(z13) at (1.5,3.0);
\coordinate(z14) at (1.5,4.0);
\coordinate(c1) at (3.5,-3.0);
\coordinate(c2) at (8.5,-3.0);
\coordinate(x11) at (1.17,-1.0);
\coordinate(x12) at (2.33,-2.0);
\coordinate(x21) at (4.0,-2.0);
\coordinate(x22) at (4.5,-1.0);
\coordinate(x31) at (5.17,-2.0);
\coordinate(x32) at (6.83,-1.0);
\coordinate(yy11) at (6.17,-2.0);
\coordinate(yy12) at (3.84,-1.0);
\coordinate(yy21) at (7.33,-2.0);
\coordinate(yy22) at (6.16,-1.0);
\coordinate(yy31) at (9.17,-2.0);
\coordinate(yy32) at (9.83,-1.0);
% \edges		
\draw (x1+) -- (y14);  
\draw (x1-) -- (z14);  
\draw (x1+) -- (z11);  
\draw (x1-) -- (y11);  
\draw (c1) -- (0,0);  % (c1) at (3.5,-3.0);
\draw (c1) -- (5,0);
\draw (c1) -- (8.5,0);
\draw (c2) -- (1.5,0); % (c2) at (8.5,-3.0)
\draw (c2) -- (5,0);
\draw (c2) -- (10.5,0);
% \draw (0,0) .. controls (2,0.5) and (4,0.5) .. (6,0);
% vertices
\draw(x1+)[fill=white] circle(\vr);
\draw(x1-)[fill=white] circle(\vr);
\draw(c1)[fill=white] circle(\vr);
\draw(c2)[fill=white] circle(\vr);
\draw(x11)[fill=white] circle(\vr);
\draw(x12)[fill=white] circle(\vr);
\draw(x21)[fill=white] circle(\vr);
\draw(x22)[fill=white] circle(\vr);
\draw(x31)[fill=white] circle(\vr);
\draw(x32)[fill=white] circle(\vr);
\draw(yy11)[fill=white] circle(\vr);
\draw(yy12)[fill=white] circle(\vr);
\draw(yy21)[fill=white] circle(\vr);
\draw(yy22)[fill=white] circle(\vr);
\draw(yy31)[fill=white] circle(\vr);
\draw(yy32)[fill=white] circle(\vr);
\foreach \i in {1,2,3,4} 
{
\draw(y1\i)[fill=white] circle(\vr);
\draw(z1\i)[fill=white] circle(\vr);
}
% text
\node at (-0.4,0.3) {$x_1^{+}$};
\node at (1.9,0.2) {$x_1^{-}$};
\node at (-0.4,1.2) {$y_1^{1}$};
\node at (-0.4,2.2) {$y_1^{2}$};
\node at (-0.4,3.2) {$y_1^{3}$};
\node at (-0.4,4.2) {$y_1^{4}$};
\node at (1.9,1.2) {$z_1^{1}$};
\node at (1.9,2.2) {$z_1^{2}$};
\node at (1.9,3.2) {$z_1^{3}$};
\node at (1.9,4.2) {$z_1^{4}$};
\node at (3.8,-3.4) {$c_1$};
\node at (8.5,-3.5) {$c_2$};
\node at (1.2,-2.0) {$P_{1,1}^+$};
% Simple brace
\draw [decorate, decoration = {brace, mirror}] (-0.3,-0.1) --  (3.4,-3.3);
\end{scope}

\begin{scope}[xshift=3.5cm, yshift=0cm] % 2ndt
% vertices defined
\coordinate(x1+) at (0.0,0.0);
\coordinate(y11) at (0.0,1.0);
\coordinate(y12) at (0.0,2.0);
\coordinate(y13) at (0.0,3.0);
\coordinate(y14) at (0.0,4.0);
\coordinate(x1-) at (1.5,0.0);
\coordinate(z11) at (1.5,1.0);
\coordinate(z12) at (1.5,2.0);
\coordinate(z13) at (1.5,3.0);
\coordinate(z14) at (1.5,4.0);
% \edges		
\draw (x1+) -- (y14);  
\draw (x1-) -- (z14);  
\draw (x1+) -- (z11);  
\draw (x1-) -- (y11);  
% \draw (0,0) .. controls (2,0.5) and (4,0.5) .. (6,0);
% vertices
\draw(x1+)[fill=white] circle(\vr);
\draw(x1-)[fill=white] circle(\vr);
\foreach \i in {1,2,3,4} 
{
\draw(y1\i)[fill=white] circle(\vr);
\draw(z1\i)[fill=white] circle(\vr);
}
% text
\node at (-0.4,0.0) {$x_2^{+}$};
\node at (1.9,0.0) {$x_2^{-}$};
\end{scope}

\begin{scope}[xshift=7.0cm, yshift=0cm] % 3rd
% vertices defined
\coordinate(x1+) at (0.0,0.0);
\coordinate(y11) at (0.0,1.0);
\coordinate(y12) at (0.0,2.0);
\coordinate(y13) at (0.0,3.0);
\coordinate(y14) at (0.0,4.0);
\coordinate(x1-) at (1.5,0.0);
\coordinate(z11) at (1.5,1.0);
\coordinate(z12) at (1.5,2.0);
\coordinate(z13) at (1.5,3.0);
\coordinate(z14) at (1.5,4.0);
% \edges		
\draw (x1+) -- (y14);  
\draw (x1-) -- (z14);  
\draw (x1+) -- (z11);  
\draw (x1-) -- (y11);  
% \draw (0,0) .. controls (2,0.5) and (4,0.5) .. (6,0);
% vertices
\draw(x1+)[fill=white] circle(\vr);
\draw(x1-)[fill=white] circle(\vr);
\foreach \i in {1,2,3,4} 
{
\draw(y1\i)[fill=white] circle(\vr);
\draw(z1\i)[fill=white] circle(\vr);
}
% text
\node at (-0.4,0.0) {$x_3^{+}$};
\node at (1.9,0.0) {$x_3^{-}$};
\end{scope}

\begin{scope}[xshift=10.5cm, yshift=0cm] % 4th
% vertices defined
\coordinate(x1+) at (0.0,0.0);
\coordinate(y11) at (0.0,1.0);
\coordinate(y12) at (0.0,2.0);
\coordinate(y13) at (0.0,3.0);
\coordinate(y14) at (0.0,4.0);
\coordinate(x1-) at (1.5,0.0);
\coordinate(z11) at (1.5,1.0);
\coordinate(z12) at (1.5,2.0);
\coordinate(z13) at (1.5,3.0);
\coordinate(z14) at (1.5,4.0);
% \edges		
\draw (x1+) -- (y14);  
\draw (x1-) -- (z14);  
\draw (x1+) -- (z11);  
\draw (x1-) -- (y11);  
% \draw (0,0) .. controls (2,0.5) and (4,0.5) .. (6,0);
% vertices
\draw(x1+)[fill=white] circle(\vr);
\draw(x1-)[fill=white] circle(\vr);
\foreach \i in {1,2,3,4} 
{
\draw(y1\i)[fill=white] circle(\vr);
\draw(z1\i)[fill=white] circle(\vr);
}
% text
\node at (-0.4,0.0) {$x_4^{+}$};
\node at (1.9,0.0) {$x_4^{-}$};
\end{scope}
\end{tikzpicture}
\caption{Graph $G_4(F)$ for $F = (x_1 \lor \overline{x}_2 \lor \overline{x}_3) \land (\overline{x}_1 \lor \overline{x}_2 \lor x_4)$}
\label{fig:G4(F)}
\end{center}
\end{figure}

\paragraph{Construction of $H_d(F)$.}  To obtain $H_d(F)$ we modify $G_d(F)$ by adding the edge $x_i^+x_i^-$  for every $i \in [k]$. Further, the paths $P_{i,j}^+$ and $P_{i,j}^-$ of length $d-1$ are removed and replaced, respectively, with paths $R_{i,j}^+$ and $R_{i,j}^-$ of length $d$.
%If $d=1$, we define an analogous construction. Starting with $G*(F)$, we add vertices $y_i^1$, $z_i^1$, and edges $x_i^+y_i^1$, $x_i^+z_i^1$, $x_i^-y_i^1$, $x_i^-z_i^1$ for every $i \in [k]$. 
Observe that $H_d(F)$ is a planar graph when $F$ is a Planar 3-SAT (or Planar 1-in-3-SAT) instance. For an example see Fig.~\ref{fig:H4(F)}. 

\begin{figure}[ht!]
\begin{center}
\begin{tikzpicture}[scale=0.9,style=thick,x=1cm,y=1cm]
\def\vr{3pt}
\begin{scope}[xshift=-0cm, yshift=0cm] % 1st
% vertices defined
\coordinate(x1+) at (0.0,0.0);
\coordinate(y11) at (0.0,1.0);
\coordinate(y12) at (0.0,2.0);
\coordinate(y13) at (0.0,3.0);
\coordinate(y14) at (0.0,4.0);
\coordinate(x1-) at (1.5,0.0);
\coordinate(z11) at (1.5,1.0);
\coordinate(z12) at (1.5,2.0);
\coordinate(z13) at (1.5,3.0);
\coordinate(z14) at (1.5,4.0);
\coordinate(c1) at (3.5,-3.0);
\coordinate(c2) at (8.5,-3.0);
\coordinate(x11) at (0.875,-0.75);
\coordinate(x12) at (1.75,-1.5);
\coordinate(x13) at (2.625,-2.25);
\coordinate(x21) at (4.625,-0.75);
\coordinate(x22) at (4.25,-1.5);
\coordinate(x23) at (3.875,-2.25);
\coordinate(x31) at (7.25,-0.75);
\coordinate(x32) at (6.0,-1.5);
\coordinate(x33) at (4.75,-2.25);
\coordinate(yy11) at (3.25,-0.75);
\coordinate(yy12) at (5.0,-1.5);
\coordinate(yy13) at (6.75,-2.25);
\coordinate(yy21) at (5.875,-0.75);
\coordinate(yy22) at (6.75,-1.5);
\coordinate(yy23) at (7.625,-2.25);
\coordinate(yy31) at (10.00,-0.75);
\coordinate(yy32) at (9.50,-1.5);
\coordinate(yy33) at (9.00,-2.25);
% \edges		
\draw (x1+) -- (y14);  
\draw (x1+) -- (x1-);
\draw (x1-) -- (z14);  
\draw (x1+) -- (z11);  
\draw (x1-) -- (y11);  
\draw (c1) -- (0,0);  % (c1) at (3.5,-3.0);
\draw (c1) -- (5,0);
\draw (c1) -- (8.5,0);
\draw (c2) -- (1.5,0); % (c2) at (8.5,-3.0)
\draw (c2) -- (5,0);
\draw (c2) -- (10.5,0);
% \draw (0,0) .. controls (2,0.5) and (4,0.5) .. (6,0);
% vertices
\draw(x1+)[fill=white] circle(\vr);
\draw(x1-)[fill=white] circle(\vr);
\draw(c1)[fill=white] circle(\vr);
\draw(c2)[fill=white] circle(\vr);
\draw(x11)[fill=white] circle(\vr);
\draw(x12)[fill=white] circle(\vr);
\draw(x13)[fill=white] circle(\vr);
\draw(x21)[fill=white] circle(\vr);
\draw(x22)[fill=white] circle(\vr);
\draw(x23)[fill=white] circle(\vr);
\draw(x31)[fill=white] circle(\vr);
\draw(x32)[fill=white] circle(\vr);
\draw(x33)[fill=white] circle(\vr);
\draw(yy11)[fill=white] circle(\vr);
\draw(yy12)[fill=white] circle(\vr);
\draw(yy13)[fill=white] circle(\vr);
\draw(yy21)[fill=white] circle(\vr);
\draw(yy22)[fill=white] circle(\vr);
\draw(yy23)[fill=white] circle(\vr);
\draw(yy31)[fill=white] circle(\vr);
\draw(yy32)[fill=white] circle(\vr);
\draw(yy33)[fill=white] circle(\vr);
\foreach \i in {1,2,3,4} 
{
\draw(y1\i)[fill=white] circle(\vr);
\draw(z1\i)[fill=white] circle(\vr);
}
% text
\node at (-0.4,0.3) {$x_1^{+}$};
\node at (1.9,0.3) {$x_1^{-}$};
\node at (-0.4,1.3) {$y_1^{1}$};
\node at (-0.4,2.3) {$y_1^{2}$};
\node at (-0.4,3.3) {$y_1^{3}$};
\node at (-0.4,4.3) {$y_1^{4}$};
\node at (1.9,1.3) {$z_1^{1}$};
\node at (1.9,2.3) {$z_1^{2}$};
\node at (1.9,3.3) {$z_1^{3}$};
\node at (1.9,4.3) {$z_1^{4}$};
\node at (3.8,-3.4) {$c_1$};
\node at (8.5,-3.5) {$c_2$};
\node at (1.2,-2.0) {$R_{1,1}^+$};
% Simple brace
\draw [decorate, decoration = {brace, mirror}] (-0.3,-0.1) --  (3.4,-3.3);
\end{scope}

\begin{scope}[xshift=3.5cm, yshift=0cm] % 2ndt
% vertices defined
\coordinate(x1+) at (0.0,0.0);
\coordinate(y11) at (0.0,1.0);
\coordinate(y12) at (0.0,2.0);
\coordinate(y13) at (0.0,3.0);
\coordinate(y14) at (0.0,4.0);
\coordinate(x1-) at (1.5,0.0);
\coordinate(z11) at (1.5,1.0);
\coordinate(z12) at (1.5,2.0);
\coordinate(z13) at (1.5,3.0);
\coordinate(z14) at (1.5,4.0);
% \edges		
\draw (x1+) -- (y14);  
\draw (x1+) -- (x1-);
\draw (x1-) -- (z14);  
\draw (x1+) -- (z11);  
\draw (x1-) -- (y11);  
% \draw (0,0) .. controls (2,0.5) and (4,0.5) .. (6,0);
% vertices
\draw(x1+)[fill=white] circle(\vr);
\draw(x1-)[fill=white] circle(\vr);
\foreach \i in {1,2,3,4} 
{
\draw(y1\i)[fill=white] circle(\vr);
\draw(z1\i)[fill=white] circle(\vr);
}
% text
\node at (-0.4,0.0) {$x_2^{+}$};
\node at (1.9,0.0) {$x_2^{-}$};
\end{scope}

\begin{scope}[xshift=7.0cm, yshift=0cm] % 3rd
% vertices defined
\coordinate(x1+) at (0.0,0.0);
\coordinate(y11) at (0.0,1.0);
\coordinate(y12) at (0.0,2.0);
\coordinate(y13) at (0.0,3.0);
\coordinate(y14) at (0.0,4.0);
\coordinate(x1-) at (1.5,0.0);
\coordinate(z11) at (1.5,1.0);
\coordinate(z12) at (1.5,2.0);
\coordinate(z13) at (1.5,3.0);
\coordinate(z14) at (1.5,4.0);
% \edges		
\draw (x1+) -- (y14);  
\draw (x1+) -- (x1-);
\draw (x1-) -- (z14);  
\draw (x1+) -- (z11);  
\draw (x1-) -- (y11);  
% \draw (0,0) .. controls (2,0.5) and (4,0.5) .. (6,0);
% vertices
\draw(x1+)[fill=white] circle(\vr);
\draw(x1-)[fill=white] circle(\vr);
\foreach \i in {1,2,3,4} 
{
\draw(y1\i)[fill=white] circle(\vr);
\draw(z1\i)[fill=white] circle(\vr);
}
% text
\node at (-0.4,0.0) {$x_3^{+}$};
\node at (1.9,0.0) {$x_3^{-}$};
\end{scope}

\begin{scope}[xshift=10.5cm, yshift=0cm] % 4th
% vertices defined
\coordinate(x1+) at (0.0,0.0);
\coordinate(y11) at (0.0,1.0);
\coordinate(y12) at (0.0,2.0);
\coordinate(y13) at (0.0,3.0);
\coordinate(y14) at (0.0,4.0);
\coordinate(x1-) at (1.5,0.0);
\coordinate(z11) at (1.5,1.0);
\coordinate(z12) at (1.5,2.0);
\coordinate(z13) at (1.5,3.0);
\coordinate(z14) at (1.5,4.0);
% \edges		
\draw (x1+) -- (y14);  
\draw (x1+) -- (x1-);
\draw (x1-) -- (z14);  
\draw (x1+) -- (z11);  
\draw (x1-) -- (y11);  
% \draw (0,0) .. controls (2,0.5) and (4,0.5) .. (6,0);
% vertices
\draw(x1+)[fill=white] circle(\vr);
\draw(x1-)[fill=white] circle(\vr);
\foreach \i in {1,2,3,4} 
{
\draw(y1\i)[fill=white] circle(\vr);
\draw(z1\i)[fill=white] circle(\vr);
}
% text
\node at (-0.4,0.0) {$x_4^{+}$};
\node at (1.9,0.0) {$x_4^{-}$};
\end{scope}
\end{tikzpicture}
\caption{Graph $H_4(F)$ for $F = (x_1 \lor \overline{x}_2 \lor \overline{x}_3) \land (\overline{x}_1 \lor \overline{x}_2 \lor x_4)$}
\label{fig:H4(F)}
\end{center}
\end{figure}

\begin{theorem} \label{thm:bipartite-NP}
    Let $d$ and $p$ be fixed integers with $2 \le d$ and $0 \le p \leq 2d-1$. It is NP-complete to decide whether $\gamma_d^p(G) \leq k$ holds if $G$ is a planar bipartite graph and $k$ is part of the input.
\end{theorem}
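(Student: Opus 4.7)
The plan is to prove NP-completeness by reducing from Planar $3$-SAT and Planar $1$-in-$3$-SAT (both NP-complete) using the bipartite planar graph $G_d(F)$ constructed above. Membership in NP is routine, since for $D \subseteq V(G)$ with $|D|\le k$ the properties of being a $d$-distance dominating set and a $p$-packing are verifiable in polynomial time via all-pairs distances. The hardness direction rests on the equivalence $\gamma_d^p(G_d(F)) \leq k$ if and only if $F$ is (Case~$1$) satisfiable or (Case~$2$) $1$-in-$3$-satisfiable, where $k$ is the number of variables of $F$.

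The structural core is a direct distance calculation inside the variable gadget $X_i$. The leaves $y_i^d$ and $z_i^d$ satisfy $N_d[y_i^d], N_d[z_i^d] \subseteq X_i$, and among the vertices of $X_i$ only $x_i^+$ and $x_i^-$ are at distance at most $d$ from both leaves. Hence every $d$-distance dominating set of $G_d(F)$ must meet each $X_i$, giving $\gamma_d^p(G_d(F)) \geq k$, and any $k$-element dominating set $D$ contains exactly one vertex per $X_i$, of the form $x_i^+$ or $x_i^-$. Such a $D$ therefore encodes a truth assignment $\phi_D(x_i) = \text{true} \iff x_i^+ \in D$. Inspecting the subdivided clause–literal paths $P_{i,j}^\pm$ (each of length $d-1$) shows that every subdivision vertex and every remaining gadget vertex is $d$-dominated by $D$ independently of $\phi_D$, whereas a clause vertex $c_j$ is $d$-dominated iff some literal of $C_j$ evaluates to true under $\phi_D$. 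Consequently, the existence of a $d$-distance dominating set of size $k$ is equivalent to satisfiability of $F$ by $\phi_D$.

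It remains to enforce the $p$-packing condition on $D_\phi = \{x_i^+ : \phi(x_i)\} \cup \{x_i^- : \lnot \phi(x_i)\}$, and here the argument splits. In Case~$1$, $0 \leq p \leq 2d-3$, the minimum distance between two distinct vertices of $D_\phi$ equals $2(d-1) \geq p+1$ (attained by two chosen literals sharing a clause; same-variable pairs are automatically excluded), so $D_\phi$ is always a $p$-packing, completing the reduction from Planar $3$-SAT. In Case~$2$, $p \in \{2d-2, 2d-1\}$, that same distance is strictly less than $p+1$, so the packing forbids two chosen literals of a common clause; combined with the domination requirement of at least one, this yields exactly one true literal per clause, precisely the $1$-in-$3$-SAT condition. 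This case, reduced from Planar $1$-in-$3$-SAT, is the one that corrects the gap in~\cite{Fricke} for $d=p=2$.

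The main technical obstacle will be the distance analysis underlying Case~$2$: one must verify that any two literal vertices lying in different clauses are separated by at least $p+1 = 2d$ steps, so that only within-clause pairs constrain the packing. The shortest such inter-clause distance turns out to be $4(d-1)$, realised by a path through two clauses meeting at a common literal, and $4(d-1) \geq 2d$ holds exactly when $d \geq 2$, with equality at the borderline case $d = 2$. Verifying this tight bound, together with ruling out shortcuts through the variable gadgets—where the removal of $x_j^+x_j^-$ forces a detour of length $2$ rather than $1$—and confirming that the subdivision vertices do not force additional members of $D$, forms the delicate computation on which the whole reduction rests.
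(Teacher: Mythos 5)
Your proposal follows essentially the same route as the paper's proof: the same gadget $G_d(F)$, the same case split at $p\le 2d-3$ versus $p\in\{2d-2,2d-1\}$, reductions from Planar 3-SAT and Planar 1-in-3-SAT respectively, the observation that exactly one of $x_i^+,x_i^-$ must be chosen per variable gadget, and the same distance estimates for clause domination and for the packing condition. The only quibble is that for $d\ge 3$ the minimum distance between chosen literal vertices whose literals share no clause is $2(d-1)+2=2d$, realised by the length-$2$ detour inside a variable gadget, rather than $4(d-1)$; since both candidate routes have length at least $2d\ge p+1$, this does not affect the correctness of the argument.
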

\proof
Let $d$ and $p$ be fixed integers complying with the conditions of the theorem. The decision problem clearly belongs to NP. To prove the NP-hardness over the class of bipartite planar graphs, we show a polynomial-time reduction from the Planar 3-SAT problem if $p \leq 2d-3$ and from the Planar 1-in-3-SAT problem if $p \in \{2d-2, 2d-1\}$.
Both problems are known to be NP-complete~\cite{Dyer}.
 Let $F$ be a Planar 3-SAT instance over $k$ variables.  

\begin{description}
    \item[Case 1]:  $p \leq 2d-3$.\\
Assume first that $\gamma_d^p(G_d(F)) \leq k$ and that $D$ is a $\gamma_d^p$-set of $G_d(F)$. For each $i \in [k]$, set $D$ must contain at least one vertex from $X_i$ to $d$-distance dominate $y_i^d$ and $z_i^d$. Therefore, $|D| \leq  k$ implies $|D \cap X_i|=1$, for $i \in [k]$, and $D \subseteq \bigcup_{i \in [k]} X_i$. Let $d_i$ denote the vertex in $D \cap X_i$. Since both $y_i^d$ and $z_i^d$ are $d$-distance dominated by $d_i$, this vertex is either $x_i^+$ or $x_i^-$. Further, every clause vertex $c_j$, for $j \in [\ell]$, must be $d$-distance dominated by a literal vertex. If a clause $C_j$ in the formula $F$ contains a positive literal $x_i$, then $d(c_j,x_i^+)=d-1$ in $G_d(F)$. If $C_j$ does not contain $x_i$, then 
\begin{equation} \label{eq:NP-1}
    d(c_j,x_i^+)\ge \min\{(d-1)+2, 3(d-1)\}= d+1.
\end{equation}
The same is true for negative literals and then, $D$ contains a vertex that represents a literal in $C_j$, for every $j \in [\ell]$. Consequently, the following truth assignment is well-defined and satisfies $F$:
\begin{equation}  \label{eq:NP-2}
    \phi(x_i)= \left\{ \begin{array}{ll}
		\mbox{true}; & x_i^+ \in D, \\		
		\mbox{false}; & x_i^- \in D.
	\end{array} \right. 
\end{equation}
Assume now that $F$ is satisfiable and consider an assignment $\phi: \{x_1, \dots, x_k\} \rightarrow \{ \mbox{true, false}\}$ that satisfies $F$.  Let us set
\begin{equation}  \label{eq:NP-3}
    D=\{x_i^+: \phi(x_i)=\mbox{true, } i \in [k]\} \cup \{x_i^-: \phi(x_i)=\mbox{false, } i \in [k]\}.
\end{equation}
Hence, $|D|=k$. Since $F$ is satisfied by $\phi$, every clause vertex $c_j$ is incident to an $x_i^+,c_j$-path $P_{i,j}^+$ of length $d-1$ such that $x_i^+ \in D$ or to an $x_i^-,c_j$-path $P_{i,j}^-$ of length $d-1$ such that $x_i^- \in D$.  Then every clause vertex is $d$-distance dominated by $D$. Concerning the remaining vertices in $G_d(F)$, we note that if $x_i^+ \in D$, it $d$-distance dominates all vertices from $X_i$, all internal vertices from the existing paths $P_{i,j}^+$ and $P_{i,j}^-$; and the same is true for $x_i^-$. Since $|D\cap \{x_i^+, x_i^-\}|=1$ for each $i\in [k]$, this proves that $D$ is a $d$-distance dominating set. Further, since $d(u,v) \ge 2(d-1)\ge p+1$ holds if $u \in X_i$, $v \in X_{i'}$, and $i \neq i'$, $D$ is a $p$-packing in $G_d(F)$. Therefore, if $p \leq 2d-3$, then $\gamma_d^p(G_d(F)) \leq |D| =k$ if and only if $F$ is satisfiable. 
\item[Case 2]:  $p \in \{2d-2, 2d-1\}$.\\
We show that the formula $F$ is $1$-in-$3$-satisfiable if and only if $\gamma_d^p(G_d(F)) \leq k$. The proof is mostly similar to the argument for Case~1. 

If $D$ is a $\gamma_d^p$-set in $G_d(F)$ and $\gamma_d^p(G_d(F))= |D| \leq k$, then $D$ contains exactly one vertex, namely $x_i^+$ or $x_i^-$ from each $X_i$,  and $D \subseteq \bigcup_{i \in [k]} X_i$ holds. As (\ref{eq:NP-1}) remains true and each clause vertex $c_j$ is $d$-distance dominated, the truth assignment $\phi$ in (\ref{eq:NP-2}) satisfies $F$. Moreover, if $u$ and $v$ are two vertices representing literals from the same clause $C_j$, then $d(u,v)= 2d-2 < p+1$ and hence, $D$ contains at most one of $u$ and $v$. Consequently, each clause $C_j$ is satisfied by exactly one literal in the truth assignment $\phi$. This proves the $1$-in-$3$-satisfiability of $F$.

Assume now that $F$ is $1$-in-$3$-satisfied by $\phi$ and define $D \subseteq V(G_d(F))$ according to (\ref{eq:NP-3}). Then $|D|=k$ and $D$ is a $d$-distance dominating set in $G_d(F)$. Moreover, any two elements of $D$ are literal vertices $u \in X_i$, $v \in X_{i'}$, with $i \neq i'$. Since $F$ is $1$-in-$3$-satisfied by $\phi$, the literals represented by $u$ and $v$ do not occur in a common clause in $F$.
We infer that $d(u,v) \ge \min\{2(d-1)+2, 4(d-1)\} =2d \ge p+1$ holds when $u,v \in D$. Set $D$ is therefore a $d$-distance $p$-packing dominating set in $G_d(F)$ and $\gamma_d^p(G_d(F)) \leq k$. This finishes the proof for Case~$2$.
\end{description}

The polynomial-time reductions established for Cases~$1$ and~$2$ finish the proof.
\qed

\medskip
The next proposition extends Theorem~\ref{thm:bipartite-NP} to $p=2d$, but here the NP-hardness is proved over a wider class of graphs. The question of whether the NP-completeness remains true over the class of planar bipartite graphs remains open. The problem in Proposition~\ref{prop:NP-perfect-code}~$(ii)$ is equivalent to the decision problem of $\gamma_d^{2d} <\infty$ and is known to be NP-complete for every fixed $d\ge 1$ over the class of all graphs \cite{Kratochvil-book}.

\begin{proposition} \label{prop:NP-perfect-code}
For each fixed integer $d \ge 2$, the following problems are NP-complete over the class of planar graphs:
\begin{itemize}
    \item[$(i)$] deciding whether $\gamma_d^{2d}(G) \leq k$ holds where integer $k$ is part of the input;
    \item[$(ii)$] deciding whether $G$ admits a $d$-perfect code.
\end{itemize} 
\end{proposition}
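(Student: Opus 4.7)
The plan is to prove both parts simultaneously by a polynomial-time reduction from Planar 1-in-3-SAT, using the construction $H_d(F)$ introduced before Fig.~\ref{fig:H4(F)}. Since $H_d(F)$ is planar whenever $F$ is a Planar 1-in-3-SAT instance and both problems clearly belong to NP, it suffices to prove the unified equivalence: $F$ is $1$-in-$3$-satisfiable if and only if $H_d(F)$ admits a $d$-perfect code, and in that case $\gamma_d^{2d}(H_d(F))=k$, where $k$ is the number of variables of $F$. Since a graph $G$ admits a $d$-perfect code if and only if $\gamma_d^{2d}(G)<\infty$, this single claim yields the desired reductions for both $(i)$ and $(ii)$ at once.

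For the ``only if'' direction, given a $1$-in-$3$ satisfying assignment $\phi$ of $F$, we take $D\subseteq V(H_d(F))$ to be the set of literal vertices selected by $\phi$ exactly as in~(\ref{eq:NP-3}), so $|D|=k$. We then verify that the balls $N_d[x]$, $x\in D$, partition $V(H_d(F))$. The chosen vertex $x_i^{\pm}$ covers all $2d+2$ vertices of $X_i$, the internal vertices and clause endpoint of every $R_{i,j}^{\pm}$ with an endpoint in $X_i$, and, via the edge $x_i^+x_i^-$, the $d-1$ internal vertices of every $R_{i,j}^{\mp}$ emanating from the opposite literal of $X_i$. For disjointness we use that any $u\in X_i$ and $v\in X_{i'}$ with $i\ne i'$ satisfy $d(u,v)\ge 2d>d$ (shortest paths between distinct variable gadgets must traverse at least one clause vertex); that internal vertices of $R_{i,j}^{\pm}$ are at distance greater than $d$ from any literal vertex outside $X_i$; and that a clause vertex $c_j$ is at distance exactly $d$ from a literal vertex of $X_i$ if and only if the corresponding literal belongs to $C_j$. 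The $1$-in-$3$ property of $\phi$ then ensures that exactly one code vertex covers each $c_j$.

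For the ``if'' direction, let $D$ be an arbitrary $d$-distance $2d$-packing dominating set in $H_d(F)$. Since $\diam(X_i)=2d$ in $H_d(F)$, realized by $y_i^d$ and $z_i^d$, the $2d$-packing condition forces $|D\cap X_i|\le 1$. Moreover, $N_d[y_i^d]=\{y_i^1,\ldots,y_i^d,x_i^+,x_i^-\}\subseteq X_i$ and $N_d[z_i^d]=\{z_i^1,\ldots,z_i^d,x_i^+,x_i^-\}\subseteq X_i$, and both of these sets must meet $D$, so $|D\cap X_i|=1$ and the element lies in $\{x_i^+,x_i^-\}$. For each $i$, the chosen literal vertex $x_i^{\pm}\in D$ lies at distance at most $d$ from every internal vertex of every $R_{i,j}^+$ and $R_{i,j}^-$ (using the edge $x_i^+x_i^-$ when needed), and at distance $d$ or $d+1$ from every clause vertex $c_j$ such that $C_j$ contains a literal on variable $x_i$. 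As $d+1<2d+1$, the $2d$-packing condition forbids any such internal path vertex or clause vertex from belonging to $D$; therefore $D$ consists of exactly the $k$ chosen literal vertices. Consequently, each clause vertex $c_j$ must be $d$-dominated by a literal vertex in $D$ representing an actual literal of $C_j$, and since two literal vertices representing distinct literals of the same clause are at distance exactly $2d$ through $c_j$, the $2d$-packing condition allows at most one per clause. The assignment defined by~(\ref{eq:NP-2}) therefore $1$-in-$3$-satisfies $F$. The main technical obstacle, and the reason for passing from $G_d(F)$ to $H_d(F)$, is to make the critical code-vertex distances land on the right side of $2d+1$: the triangle edge $x_i^+x_i^-$ together with the longer length-$d$ clause paths is exactly what enforces the $2d$-packing condition in the forward direction while still allowing a $d$-perfect code structure.
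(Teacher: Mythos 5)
Your proposal is correct and follows essentially the same route as the paper: a reduction from Planar 1-in-3-SAT via $H_d(F)$, using that $|D\cap X_i|=1$ is forced by $d$-dominating $y_i^d, z_i^d$ together with $\diam(X_i)=2d$, that all non-literal vertices are excluded from $D$ by the $d+1<2d+1$ distance bound, and that clause vertices are $d$-dominated by exactly one selected literal. The only (cosmetic) difference is that you merge parts $(i)$ and $(ii)$ into a single equivalence, whereas the paper runs the two nearly identical arguments separately.
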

\proof (i) Let $F$ be an instance of the Planar $1$-in-$3$-SAT problem. We are going to prove that $\gamma_d^{2d}(H_d(F)) \leq k$ if and only if $F$ is $1$-in-$3$-satisfiable. Suppose first that $\gamma_d^{2d}(H_d(F)) \leq k$ and $D$ is a $\gamma_d^{2d}$-set in $H_d(F)$. To $d$-distance dominate $y_i^d$ and $z_i^d$, set $D$ contains a vertex from every $X_i$, for $i \in [k]$. As $|D|\leq k$, we infer $|D\cap X_i|=1$ and that the common vertex is a literal vertex. To $d$-distance dominate a clause vertex $c_j$, at least one vertex representing a literal in $C_j$ belongs to $D$. Finally, if $u$ and $v$ represent two literals from the same clause, then $d(u,v)= 2d <2d+1$ and hence, $D$ contains at most one of them. This proves that the following function $1$-in-$3$-satisfies $F$:  
\begin{equation}  \label{eq:NP-4}
    \phi(x_i)= \left\{ \begin{array}{ll}
		\mbox{true}; & x_i^+ \in D, \\		
		\mbox{false}; & x_i^- \in D.
	\end{array} \right. 
\end{equation}

If a truth assignment $\phi$  $1$-in-$3$-satisfies a $3$-SAT instance $F$, we define the set
\begin{equation}  \label{eq:NP-5}
    D=\{x_i^+: \phi(x_i)=\mbox{true, } i \in [k]\} \cup \{x_i^-: \phi(x_i)=\mbox{false, } i \in [k]\},
\end{equation}
and observe that $D$ is a $2d$-packing $d$-distance dominating set in $H_d(F)$ of cardinality $k$. This proves $\gamma_d^{2d}(H_d(F)) = k$.

(ii) We prove that $\gamma_d^{2d}(H_d(F)) < \infty$, that is $H_d(F)$ has a $d$-perfect code, if and only if $F$ is $1$-in-$3$-satisfiable. Suppose that $D'$ is a $d$-distance $2d$-packing dominating set in $H_d(F)$. Since it $d$-distance dominates $y_i^d$ and $z_i^d$, we have $|D'\cap X_i| \ge 1$ for every $i \in [k]$. Further, since the subgraph induced by $X_i$ is of diameter $2d$, no $2d$-packing contains more than one vertex from $X_i$. Therefore, $|D'\cap X_i| = 1$ for every $i \in [k]$, and the common vertex is either $x_i^+$ or $x_i^-$. In either case, no vertex from  $(R_{i,j}^+ \cup R_{i,j}^-) \setminus\{x_i^+, x_i^-\}$ belongs to $D'$ as the distance to the vertex in $D' \cap X_i$ would be at most $d+1 <2d+1$. Consequently, $D'$ consists only of literal vertices.
To $d$-distance dominate a clause vertex $c_j$, $D'$ contains a vertex representing a literal in $C_j$. On the other hand, $D'$ cannot contain two such vertices because their distance is only $2d$. After defining $\phi$ analogously to~(\ref{eq:NP-4}), we may infer that $\phi$ $1$-in-$3$-satisfies $F$.

To establish the other direction, we suppose that a truth assignment $\phi$ $1$-in-$3$-satisfies $F$ and define $D'$ according to~(\ref{eq:NP-5}). It is easy to check that $D'$ is a $2d$-packing
 and also a $d$-distance dominating set in $H_d(F)$.
 \qed
 \medskip

 To conclude the section, we mention that the decision problem of $\gamma_d^{p}(G) \leq k$ can be solved in linear time over the class of trees. It is true for every two nonnegative integers $d$ and $p$ and follows directly from Courcelle's theorem~\cite{Courcelle-1990}.

 %%%%%%%%%%%%%%%%%%%%%%%%%%%%%%%%%%%%%%%
\section{Cycles}
\label{sec:cycles}

For integers $0 \le p \le d$, every graph has a $d$-distance $p$-packing dominating set. Further, $\gamma_d^p(G)=1$ holds for any graph $G$ with $\rad(G)\leq d $, even if $p> d$. On the other hand, $\gamma_d^p(G)=\infty$ whenever $p>2d$ and $\rad(G) > d$. We have also seen that deciding whether $\gamma_d^{2d}(G) < \infty$ holds is an NP-hard problem for every $d \ge 1$.

For cycles, there are infinitely many examples with $\gamma_d^p(C_n)=\infty$ and $d < p \leq 2d$. For example, $\gamma_2^3(C_n)= \infty$ when $n \in \{6,7,11\}$, and $\gamma_2^4(C_n)=\infty$ when $n \ge 6$ and $n \not\equiv 0\ (\bmod\ {5})$. In this section, we prove the exact value of $\gamma_d^p(C_n)$ for every $p$, $d$, and $n$. First, assume that $p>2d$ and observe that $\gamma_d^p(C_n)=1$ holds when $n \le 2 d+1$ and  $\gamma_d^p(C_n)=\infty$ when $n \ge 2d+2$. Further, for any $d$ and $p$, we have $\gamma_d^p(C_n)= 1 = \lceil\frac{n}{2d+1} \rceil$ when $n \leq 2d+1$. The remaining cases are handled in the following theorem.

\begin{theorem} \label{thm:cycles}
If $d$, $p$, and $n\ge 3$ are integers with  $0 \leq p \leq 2d$ and $n \ge 2d+2$, then 
 $$
    \gamma_d^p(C_n)=
    \begin{cases}
		\left\lceil\frac{n}{2d+1}\right\rceil; & \frac{n}{p+1} \ge \left\lceil\frac{n}{2d+1} \right\rceil,\\
        	\infty; & \mbox{otherwise}.\\
    \end{cases}
    $$    
\end{theorem}
\begin{proof} If $D \subseteq V(C_n)$ and $|D|=k >1$, we denote by $x_1, \dots, x_k$ the distances between the consecutive vertices of $D$ along the cycle (i.e., the lengths of the arcs defined by $D$). Clearly, $\sum_{i=1}^k x_i=n$ and further, $D$ is a $d$-distance $p$-packing dominating set if and only if $p+1 \le x_i \le 2d+1$  for every $i \in [k]$. Therefore, if $D$ is a $\gamma_d^p(C_n)$-set, then $|D|\cdot (2d+1) \ge n$ holds and $\gamma_d^p(C_n) \ge \lceil\frac{n}{2d+1}\rceil$ follows.
\medskip

In the next part, we assume that $\gamma_d^p(C_n) <\infty$ and prove that $\gamma_d^p(C_n) = \lceil\frac{n}{2d+1}\rceil$. Let $D$ be an arbitrary subset of $V(C_n)$ and let $|D|=k \ge 2$. For the sequence $x_1, \dots, x_k$ of the lengths of the arcs defined by $D$, we say that it is \emph{nearly balanced}, if $|x_i -x_j| \leq 1$ for every $1 \leq i <j \leq k$. The set $D$ is nearly balanced if so is the corresponding sequence $x_1, \dots, x_k$.
\begin{claim} \label{claim:cycles-1}
If $D$ is a $d$-distance $p$-packing dominating set in $C_n$, then there exists a nearly balanced $d$-distance $p$-packing dominating set $D''$ in $C_n$ with $|D''|=|D|$.    
\end{claim}
\proof If $D$ is not nearly balanced, then there are two arcs with $x_i +2 \leq x_j$. By replacing $x_i$ with $x_i+1$ and $x_j$ with $x_j-1$, we get a new sequence and a corresponding vertex set $D' \subseteq V(C_n)$. Since $D$ was a $d$-distance $p$-packing dominating set, we have
$$
p+1 \le x_i <x_i +1 \le x_j-1 <x_j \le 2d+1,
$$
which proves that $D'$ is a $d$-distance $p$-packing dominating set in $C_n$. Repeating this procedure if needed, we obtain a nearly balanced $d$-distance $p$-packing dominating set $D''$ with $|D''|=|D|$ at the end. \smallqed

If $|D|= k$ and $D$ is a nearly balanced $d$-distance $p$-packing dominating set, then the lengths of the arcs are $\lfloor\frac{n}{k} \rfloor$ and $\lceil\frac{n}{k} \rceil$, and both lengths occur on some arcs. Consequently, in this case, $p+1 \le \lfloor\frac{n}{k} \rfloor \le \lceil\frac{n}{k} \rceil \le 2d+1 $.
\begin{claim} \label{claim:cycles-2}
If $\gamma_d^p(C_n) <\infty$, then $\gamma_d^p(C_n) =  \left\lceil\frac{n}{2d+1}\right\rceil$.    
\end{claim}
\proof Suppose that $D$ is a $d$-distance $p$-packing dominating set in $C_n$. By Claim~\ref{claim:cycles-1}, we may assume that $D$ is nearly balanced. Let $|D|=k$ and $a= \lceil\frac{n}{2d+1}\rceil$. As we have already proved, $\gamma_d^p(C_n) \ge a$. It implies $k \ge a$. Consider a nearly balanced set $D_a \subseteq V(C_n)$ such that $|D_a|=a$. Note that such a set exists and that the sequence $x_1', \dots , x_a'$ corresponding to $D_a$ consists of entries $\lfloor\frac{n}{a} \rfloor$ and $\lceil\frac{n}{a} \rceil$. Since $D$ is a $d$-distance $p$-packing dominating set and $k \ge a$, we infer
\begin{equation} \label{eq:cycles-1}
   \displaystyle p+1 \le \left\lfloor\frac{n}{k} \right\rfloor \le \left\lfloor\frac{n}{a} \right\rfloor \le \left\lceil\frac{n}{a} \right\rceil \le 2d+1, 
\end{equation}
where the last inequality follows from the fact that $n \leq (2d+1) \lceil\frac{n}{2d+1} \rceil = (2d+1)a $. As (\ref{eq:cycles-1}) implies, $D_a$ is a $d$-distance $p$-packing dominating set and then $\gamma_d^p(C_n) \le \lceil\frac{n}{2d+1} \rceil $. We may now conclude the equality $\gamma_d^p(C_n) = \lceil\frac{n}{2d+1} \rceil $. \smallqed
\medskip

What remains to prove is that $\gamma_d^p(C_n) <\infty$ if and only if $\frac{n}{p+1} \ge \lceil\frac{n}{2d+1} \rceil$. By Claims~\ref{claim:cycles-1} and \ref{claim:cycles-2}, we know that $\gamma_d^p(C_n) <\infty$ holds if and only if the nearly balanced set $D_a$ with $|D_a|=a = \lceil\frac{n}{2d+1} \rceil$ is a $d$-distance $p$-packing dominating set in $C_n$. The sequence corresponding to $D_a$ consists of entries $\lfloor\frac{n}{a} \rfloor$ and $\lceil\frac{n}{a} \rceil$. Recall that $n \leq (2d+1)a $ is always true and implies $\lceil\frac{n}{a}\rceil \leq 2d+1$. Thus, $D_a$ is a $d$-distance $p$-packing dominating set if and only if $p+1 \le \lfloor \frac{n}{a}\rfloor$. Since $p+1$ is an integer, it is equivalent to $p+1 \le \frac{n}{a}$, from which we get $a \le \frac{n}{p+1}$. By substituting $a=\lceil \frac{n}{2d+1}\rceil$, we obtain the desired condition which is equivalent to $\gamma_d^p(C_n) < \infty$.    
\end{proof}
\begin{remark}
\label{rem:Daniel}
  Let $n \ge 2d+2$ and $p <2d$. 
  %As we have seen in the proof of Theorem~\ref{thm:cycles}, 
  The property $\gamma_d^p(C_n) <\infty$ is equivalent to the existence of an integer $k$ for which the diophantine equation 
  $$ x_1+x_2 + \cdots + x_k=n
  $$
  has a solution such that $p+1 \le x_i \le 2d+1$ for each $i \in [k]$. This problem can be modeled by considering the numerical semigroup $S=\langle p+1, \dots, 2d+1\rangle$ generated by consecutive integers. Then $\gamma_d^p(C_n) <\infty$ if and only if $n \in S$. For the latter, Garc\' ia-S\' anchez and Rosales~\cite[Corollary 2]{Garcia} proved the necessary and sufficient condition $n\ (\bmod\ p+1) \leq \lfloor \frac{n}{p+1} \rfloor \, (2d-p)$. Since the residue $n\ (\bmod\ p+1)$   equals $n - \lfloor\frac{n}{p+1} \rfloor(p+1),$
  the condition can be rewritten as $\frac{n}{p+1} \ge \lceil\frac{n}{2d+1} \rceil$.
\end{remark}

We close this section by determining $\gamma_d^p(P_n)$ for every path $P_n$. Unlike for cycles, the solution for paths is straightforward. Note first that if $p \ge 2d+1$, then $n \ge 2d+2$ implies $\gamma_d^p(P_n)=\infty$, while $n \le 2d+1$ implies $\gamma_d^p(P_n) = 1$. In the other cases we have the following result. 

\begin{proposition} \label{prop:path}
    For every three integers $d$, $p$, and $n$ with $0 \le p \le 2d$, it holds that $\gamma_d^p(P_n)= \left\lceil\frac{n}{2d+1}\right\rceil$.
\end{proposition}
\begin{proof}
   Since every vertex of the path can $d$-distance dominate at most $2d+1$ vertices, we may infer $\gamma_d^p(P_n)\ge  \lceil\frac{n}{2d+1}\rceil$. Let the vertices of $P_n$ be denoted by the integers $1,2,\dots, n$ in natural order, and write $n$ in the form $n=(2d+1)q -r $ with $0 \le r \le 2d $. Hence, $q= \lceil\frac{n}{2d+1}\rceil$. Let 
   $$D= \{(2d+1)i-d \colon 1 \le i \le q\}$$
   if $r \leq d$, and set 
   $$D= \{(2d+1)i-2d \colon 1 \le i \le q \} $$  if $r > d$. In both cases, $|D|=q$ and $D$ is a $d$-distance $2d$-packing dominating set in $P_n$. This proves $\gamma_d^p(P_n)= \lceil\frac{n}{2d+1}\rceil$ for every $p \leq 2d$.
\end{proof}

%%%%%%%%%%%%%%%%%%%%%%%%%%%%%%%%%%%%%%%

%%%%%%%%%%%%%%%%%%%%%%%%%%%%%%%%%%%%%%%
\section{Proof of Theorem~\ref{thm:lower-bound-trees-2-domination}}
\label{sec:proof1}
%%%%%%%%%%%%%%%%%%%%%%%%%%%%%%%%%%%%%%%

Recall that 
$$\F_2 = \{ T:\ T \text{ tree},\ d(x,y) \equiv 2\ (\bmod\ {5}) \text{ for every } x, y \in S(T), x\ne y\} \setminus \{K_{1,n}:\ n \geq 2\}\,,$$
and that ${\cal T}_2$ is the set of trees in which every two different leaves are at distance $4\ (\bmod\ 5)$. Thus, if $T\in {\cal T}_2$, then every two different support vertices are at distance $2\ (\bmod\ 5)$, that is, ${\cal T}_2 \subseteq \F_2$. For the proof of Theorem~\ref{thm:lower-bound-trees-2-domination} we need the following specific version of~\cite[Lemma 2.1]{Meierling-2005} which is analogous to~\cite[Lemma 2.3]{Meierling-2005}: 

\begin{lemma}
    \label{lem:remove-edge}
    If $T \in \F_2$ has $s$ support vertices, $\ell$ leaves, and $\gamma_2^0(T) \geq 2$, then there exists $uv \in E(T)$ such that $T_u \in \F_2$, $T_v \in \F_2$, $\gamma_2^0(T) = \gamma_2^0(T_u) + \gamma_2^0(T_v)$, $\ell = \ell_u + \ell_v - 2$ and $s = s_u + s_v - 2$, where $\ell_x$ and $s_x$ respectively denote the number of leaves and support vertices in $T_x$\ for $x \in \{u,v\}$.
\end{lemma}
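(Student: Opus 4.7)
My plan is to locate a specific edge near the middle of a diametrical path of $T$ and show that removing it produces the desired decomposition.

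First, fix a diametrical path $P\colon v_0 v_1 \ldots v_d$ in $T$. Since $T \in \F_2$ is not a star, $v_1$ and $v_{d-1}$ are two distinct support vertices at distance $d-2$, so the $\F_2$ condition forces $d \equiv 4 \pmod{5}$. The hypothesis $\gamma_2^0(T) \ge 2$ excludes $\rad(T) \le 2$, which in a tree gives $\diam(T) \ge 5$; combined with $d \equiv 4 \pmod 5$ this yields $d \ge 9$. The crucial residue computation is then: every two distinct supports $x, y$ of $T$ satisfy $d(x,y) \equiv 2 \pmod 5$. Applied to $(v_1, v_i)$ and $(v_{d-1}, v_i)$ for $2 \le i \le d-2$, one gets the incompatible pair $i - 1 \equiv 2$ and $d - 1 - i \equiv 2 \pmod 5$, so $v_1, v_{d-1}$ are the \emph{only} supports on $P$. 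Applied to a support $s$ lying in a branch of $T$ rooted at $v_i$, the same two congruences give $2i \equiv d \equiv 4 \pmod 5$, hence $i \equiv 2 \pmod 5$, and then $d(v_i, s) \equiv 1 \pmod 5$. In particular, branches can only attach at the positions $v_2, v_7, v_{12}, \ldots, v_{d-2}$, and every $v_i$ with $2 \le i \le d-2$ and $i \not\equiv 2 \pmod 5$ has $\deg_T(v_i) = 2$; this covers $v_3, v_4, v_5, v_6$.

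Next, I take $uv := v_4 v_5$ with $u = v_4$, $v = v_5$, $u' = v_3$, $v' = v_6$. The previous step yields $\deg_T(u) = \deg_T(v) = 2$, and neither $u'$ nor $v'$ is a support of $T$. Therefore $u, v$ become leaves in $T_u, T_v$ respectively, while $u', v'$ become brand-new supports there, giving at once $\ell = \ell_u + \ell_v - 2$ and $s = s_u + s_v - 2$. To verify $T_u \in \F_2$: the supports of $T_u$ are the supports of $T$ in $V(T_u)$ (namely $v_1$ together with any supports in a branch at $v_2$) plus the new support $u' = v_3$. Pairs among the old supports inherit their distance $\equiv 2 \pmod 5$ from $T$; the pair $(v_3, v_1)$ has distance $2$; and for any $s$ in a branch at $v_2$ we have $d(v_3, s) = 1 + d(v_2, s) \equiv 1 + 1 \equiv 2 \pmod 5$. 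The symmetric computation gives $T_v \in \F_2$, and the exclusion of stars $K_{1,n}$ is automatic since each subtree contains a path of length $\ge 4$.

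Finally, I would establish the additivity $\gamma_2^0(T) = \gamma_2^0(T_u) + \gamma_2^0(T_v)$. The inequality $\le$ follows at once from unioning $\gamma_2^0$-sets of $T_u$ and $T_v$. For $\ge$, given a $\gamma_2^0$-set $D$ of $T$, set $D_u = D \cap V(T_u)$ and $D_v = D \cap V(T_v)$. Because the cut is located in a degree-$2$ corridor of $P$, the only vertex of $T_u$ that can fail to be $2$-dominated by $D_u$ is the new leaf $u = v_4$; if $v_4$ is $2$-dominated in $T$ only by a vertex $x \in \{v_5, v_6\} \subseteq V(T_v)$, I perform a swap replacing $x$ in $D$ by $v_4$. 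The clean local structure $v_3 v_4 v_5 v_6$ together with the fact that the next support of $T$ on the $T_v$-side is far from $v_6$ guarantees that every vertex previously dominated by $x$ remains dominated. Doing the symmetric swap on the $T_v$-side if necessary produces a $\gamma_2^0$-set of $T$ that splits cleanly, so $|D| \ge \gamma_2^0(T_u) + \gamma_2^0(T_v)$.

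The main obstacle is the additivity step: while the structural work reduces to careful modular bookkeeping, the local swap argument must be verified to leave no vertex of $T$ un-dominated. The ``degree-$2$ corridor'' $v_3 v_4 v_5 v_6$ produced by the residue analysis in the first paragraph is exactly what makes this swap go through.
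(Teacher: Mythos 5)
Your structural analysis (diametrical path, the congruence forcing $d\equiv 4\pmod 5$ and $d\ge 9$, the degree-$2$ corridor $v_3v_4v_5v_6$, the choice of the edge $v_4v_5$, and the verification of the leaf/support counts and of $T_u,T_v\in\F_2$) is correct and essentially identical to the paper's, modulo cosmetic differences in how the modular bookkeeping is organized. The problem is the additivity step, which you yourself flag as the main obstacle: as written, it does not work. Your plan is to take an arbitrary $\gamma_2^0$-set $D$ of $T$, observe that $D\cap V(T_u)$ may fail to dominate vertices near the cut, and repair this by moving the dominator $x\in\{v_5,v_6\}$ of $v_4$ to $v_4$ itself. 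This swap goes \emph{toward} the cut, and after it $D\cap V(T_v)$ loses the element $x$; nothing in your argument supplies a replacement dominator for $v_5$, $v_6$, $v_7$, $v_8$, or the branch vertices at $v_7$ (note that a support of $T$ may sit at distance $1$ from $v_7$ inside a branch, so "the next support is far from $v_6$" is simply false). The "symmetric swap on the $T_v$-side" cannot fix this, because there is no element of $D\cap V(T_v)$ near the cut available to move — you would have to add a vertex, which destroys the count. A secondary inaccuracy: $v_4$ is not the only vertex of $T_u$ that can fail to be dominated by $D\cap V(T_u)$; the vertex $v_3$ can as well (it lies at distance $2$ from $v_5$), e.g.\ when $v_0$'s dominator is taken to be $v_0$ or a leaf attached to $v_1$ rather than $v_2$.

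The paper avoids all of this by normalizing $D$ \emph{before} cutting, pushing dominators \emph{away} from the cut edge: since every vertex that $2$-dominates $v_0$ is itself within distance $2$ of $v_2$ (here diametricality is used to see that $N_2[w]\subseteq N_2[v_2]$ for each such $w$), one may assume $v_2\in D$; then, using $\deg(v_i)=2$ for $i\in\{3,\dots,6\}$, the dominator of $v_5$ lies in $\{v_3,\dots,v_7\}$ and satisfies $N_2[w]\subseteq N_2[v_2]\cup N_2[v_7]$, so one may further assume $v_7\in D$. With $v_2,v_7\in D$ the split is immediate: $\{v_2\}$ dominates $T_u$ (so $\gamma_2^0(T_u)=1$) and $D\setminus\{v_2\}$ dominates $T_v$, giving $\gamma_2^0(T)\ge \gamma_2^0(T_u)+\gamma_2^0(T_v)$, while the reverse inequality follows from taking unions. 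If you replace your swap by this normalization, your proof becomes the paper's proof.
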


\begin{proof}
    Let $T$ be as in the assumptions of the lemma. Let $P = v_0 v_1 \ldots v_m$ be a diametrical path in $T$ and let $D$ be a $\gamma_2^0(T)$-set. Since $v_0$ can be $2$-distance dominated only by vertices which are also $2$-distance dominated by $v_2$, we may without loss of generality assume that $v_2\in D$. As $\gamma_2^0(T) \geq 2$, we have $\diam(T) \geq 5$, that is, $m \geq 5$. As $T \in \F_2$ and $v_1, v_{m-1}$ are support vertices in $T$, $m-2= d(v_1, v_{m-1}) \equiv 2 \pmod{5}$. Thus $m \equiv 4 \pmod{5}$ and $m \geq 9$. 

    We prove that $\deg(v_i) = 2$ for all $i \in \{3,4,5,6\}$. Suppose to the contrary that for some $i \in \{3,4,5,6\}$ $\deg(v_i) \geq 3$. So there is a leaf $x$ in $T$ such that $d(x,v_i) = d(x,P) \geq 1$. Thus we have
    \begin{align*}
        d(x, v_m) & = d(x, v_i) + d(v_i, v_m) \\
       % & = (d(x, v_0) - d(v_i, v_0)) + (m - i) \\
        & = (d(x, v_0) - i) + (m - i)\\
        & = d(x, v_0) + m - 2i.
    \end{align*}
    So $2i = m + d(x, v_0) - d(x, v_m)$ and as $T \in \F_2$ and $x, v_0, v_m$ are leaves in $T$ with different support vertices, we get $2i \equiv 4 + 4 - 4 \equiv 4 \pmod{5}$. But as $i \in \{3,4,5,6\}$, $2i \pmod{5} \in \{1,3,0,2\}$, so we have a contradiction.

    Hence we know that $\deg(v_i) = 2$ for all $i \in \{3,4,5,6\}$ and so we choose $D$ such that $v_7 \in D$. We now prove that $v_4 v_5$ is the edge we are looking for. Let $u = v_4$ and $v=v_5$. As $m \geq 9$, $\ell = \ell_u + \ell_v - 2$ and $s = s_u + s_v - 2$. Clearly, $\gamma_2^0(T_u) = 1$ and $D \setminus \{v_2\}$ is a $\gamma_2^0(T_v)$-set, so $\gamma_2^0(T_u) + \gamma_2^0(T_v) = \gamma_2^0(T)$. It is also easy to see that $T_u \in \F_2$ as $\diam(T_u) = 4$. Let $p,q$ be support vertices in $T_v$. If they are both also support vertices in $T$, then we have $d(p,q) \equiv 2 \pmod{5}$. If $p = v_6$, then $d(v_6, q) = d(v_1, q) - d(v_1, v_6) \equiv 2 - 5 \equiv 2 \pmod{5}$. Thus $T_v \in \F_2$ as well as $\diam(T_v) \geq 4$.
\end{proof}

We are now ready to prove Theorem~\ref{thm:lower-bound-trees-2-domination}. It asserts that if $T$ is a tree on $n$ vertices with $\ell$ leaves and $s$ support vertices, then $$\gamma_2^0(T) \geq \frac{n-\ell-s+4}{5}\,,$$ 
where the equality holds if and only if $T\in \F_2$.

\medskip\noindent 
{\bf Proof of Theorem~\ref{thm:lower-bound-trees-2-domination}}. 
    We prove the bound by induction on $\gamma_2^0(T)$. If $\gamma_2^0(T) = 1$, then $\diam(T) \leq 4$. By considering a central vertex of $T$ we infer that any other vertex must be a leaf or a support vertex, hence $\ell + s \geq n-1$. So $1 \geq \frac{n-\ell-s+4}{5}$. 
    
    From now on, let $T$ be a tree with $\gamma_2^0(T) \geq 2$. By~\cite[Lemma 2.1]{Meierling-2005} there exists $uv \in E(T)$ such that $\gamma_2^0(T) = \gamma_2^0(T_u) + \gamma_2^0(T_v)$. By induction hypothesis, we know that $\gamma_2^0(T_u) \geq \frac{n_u-\ell_u-s_u+4}{5}$ and $\gamma_2^0(T_v) \geq \frac{n_v-\ell_v-s_v+4}{5}$ where $n_x$, $\ell_x$ and $s_x$ denote the number of vertices, leaves and support vertices in $T_x$ for $x \in \{u,v\}$. By definition, $n_u + n_v = n$ and it is not hard to see that $\ell_u + \ell_v \leq \ell + 2$ and $s_u + s_v \leq s + 2$. Thus we have
    \begin{align*}
        \gamma_2^0(T) & = \gamma_2^0(T_u) + \gamma_2^0(T_v)\\
        & \geq \frac{n_u-\ell_u-s_u+4}{5} + \frac{n_v-\ell_v-s_v+4}{5}\\
        & \geq \frac{n - (\ell + 2) - (s+2) + 8}{5}\\
        & = \frac{n-\ell-s+4}{5}. 
    \end{align*}

For the equality part of the theorem, assume that $T$ is a tree with $\gamma_2^0(T) = \frac{n-\ell-s+4}{5}$. We prove that $T \in \F_2$ by induction on $\gamma_2^0(T)$.

    If $\gamma_2^0(T) = 1$, then $\diam(T) \leq 4$ and $n-1=\ell+s$. So there is exactly one vertex $v$ in $T$ that is neither a leaf nor a support. If $T=K_1$, then it belongs to $\F_2$. And if $n \geq 2$, then $N(v)$ is exactly the set of support vertices of $T$ and the remaining vertices are leaves, so $T$ is not a star and $d(x,y) = 2$ for every two support vertices $x,y$ of $T$. Hence, $T \in \F_2$. 

    Let $\gamma_2^0(T) \geq 2$ and assume that all trees with $2$-distance domination number smaller than $\gamma_2^0(T)$ that attain the equality in Theorem~\ref{thm:lower-bound-trees-2-domination} belong to $\F_2$. As $\gamma_2^0(T) = \frac{n-\ell-s+4}{5}$, reconsidering the proof of the inequality,
    %Theorem~\ref{thm:lower-bound-trees-2-domination}, 
    we can conclude that $\gamma_2^0(T_x) = \frac{n_x-\ell_x-s_x+4}{5}$ for $x \in \{u,v\}$ and that $\ell_u + \ell_v = \ell + 2$, $s_u + s_v = s + 2$. By the induction hypothesis, this implies $T_u, T_v \in \F_2$. It also follows that $u$ is a leaf in $T_u$ and that its neighbor $u'$ has no other leaves in $T_u$. Similarly, $v$ is a leaf in $T_v$ and its neighbor $v'$ has no other leaves in $T_v$. Let $p,q$ be support vertices of $T$. If $p,q \in V(T_u)$ or $p,q \in V(T_v)$, then $d(p,q) \equiv 2 \pmod{5}$ as $T_u, T_v \in \F_2$. If $p \in V(T_u)$ and $q \in V(T_v)$, then $d(p,q) = d(p,u') + 3 + d(v',q) \equiv 2 + 3 + 2 \equiv 2 \pmod{5}$. Hence, $T \in \F_2$.

    Now let $T \in \F_2$. By induction on $\gamma_2^0(T)$ we prove that $\gamma_2^0(T) = \frac{n-\ell-s+4}{5}$. If $\gamma_2^0(T) = 1$, then $\diam(T) \leq 4$. The only trees with diameter at most 4 that are in $\F_2$ are $K_1$ and trees
    %a tree 
    of diameter 4 in which the center is neither support nor leaf. These trees attain equality in Theorem~\ref{thm:lower-bound-trees-2-domination}. 

    Let $\gamma_2^0(T) \geq 2$ and assume that trees in $\F_2$ with $2$-distance domination number smaller than $T$ that are in $\F_2$ attain the equality. By Lemma~\ref{lem:remove-edge} there is an edge $uv \in E(T)$ such that $T_u, T_v \in \F_2$, $\gamma_2^0(T) = \gamma_2^0(T_u) + \gamma_2^0(T_v)$, $\ell = \ell_u + \ell_v - 2$ and $s = s_u + s_v - 2$. By the induction hypothesis, $\gamma_2^0(T_x) = \frac{n_x-\ell_x-s_x+4}{5}$ for $x \in \{u,v\}$. Summing these two equalities and using the expressions for $\ell$ and $s$, we obtain $\gamma_2^0(T) = \frac{n-\ell-s+4}{5}$. 
\qed

%%%%%%%%%%%%%%%%%%%%
\section{Proof of Theorem~\ref{thm:2-2-lower-upper}}    
\label{sec:proof2}
%%%%%%%%%%%%%%%%%%%%

Recall that Theorem~\ref{thm:2-2-lower-upper} asserts that if $T$ is a tree on $n \ge 2$ vertices with $\ell$ leaves and $s$ support vertices, then 
    $$\left \lceil \frac{n-\ell-s+4}{5} \right \rceil \leq \mrho(T) \leq \left \lfloor \frac{n+3s-1}{5} \right \rfloor\,.$$

In the proof we will use the fact that $D \subseteq V(G)$ is a $2$-distance $2$-dominating set if and only if it is a maximal $2$-packing in $G$.

\medskip\noindent 
{\bf Proof of Theorem~\ref{thm:2-2-lower-upper}}. 
The lower bound follows by combining Theorem~\ref{thm:lower-bound-trees-2-domination} with~Proposition~\ref{prop:lower-bound-k-domination}. 

To prove the upper bound, consider a nontrivial tree $T$ with a diametrical path $v_0v_1 \ldots v_{d}$. Then $\diam(T)=d$. We root $T$ in the leaf $v_{d}$. Let $T(v)$ be the vertex set of the subtree rooted in the vertex $v \in V(T)$; that is $T(v)$ contains $v$ and its descendants.  Whenever we define a subtree $T'$, its order and the number of support vertices in $T'$ are denoted by $n'$ and $s'$, respectively, and $D'$ denotes a $\mrho$-set in $T'$. 

If $d \leq 4$, then $\mrho(T)=1$ and the statement holds because we either have $n=s=2$ or $n \ge 3$ and $s \ge 1$. We now assume that $d \ge 5$ and proceed by induction on $n$. 
\begin{description}
    \item [Case 1]: $\deg(v_2) \ge 3$.\\
    Let $T'=T- T(v_1)$ and note that it is not a trivial tree. By our degree condition, $v_2$ is not a leaf in $T'$ and hence, we have $s'=s-1$ and $n'\leq n-2$. For a $\mrho$-set $D'$ in $T'$, it is either a maximal $2$-packing $D$ in $T$ or a maximal $2$-packing in $T$ is obtained as $D=D' \cup \{v_0\}$. Applying the induction hypothesis to $T'$, we obtain
    $$ \mrho(T) \leq |D| \leq |D'|+1 \leq \frac{n'+3s'-1}{5} +1 \leq \frac{n+3s-1}{5}. 
    $$
     \item [Case 2]: $\deg(v_2)=2$ and $\deg(v_3) \ge 3$.\\
      Let $T'=T- T(v_2)$ and observe that $2 \leq n' \leq n -3$. Vertex $v_1$ is the only support vertex of $T$ in $T(v_2)$, and as $\deg(v_3) \ge 3$, no new support vertex arises when we consider $T'$. Hence, $s' =s-1$. The set $D=D' \cup \{v_0\}$ is always a maximal $2$-packing in $T$. Then, we deduce
      $$\mrho(T) \leq |D| = |D'|+1 \leq \frac{n'+3s'-1}{5} +1 \leq \frac{n+3s-2}{5}.
      $$
      \item [Case 3]: $\deg(v_2)=\deg(v_3)=2$ and $\deg(v_4) \ge 3$.\\
      Let $T'=T- T(v_3)$ and observe again that $2 \leq n' \leq n -4$ and $s' =s-1$ hold under the present conditions. The set $D=D' \cup \{v_1\}$ is a maximal $2$-packing in $T$. Therefore, we have
      $$\mrho(T) \leq |D| = |D'|+1 \leq \frac{n'+3s'-1}{5} +1 \leq \frac{n+3s-3}{5}.
      $$
      \item [Case 4]: $\deg(v_2)=\deg(v_3)=\deg(v_4)= 2$.\\
      If $\deg(v_5)=1$, then $v_5$ is the root, $n \ge 6$, and $s=2$. For this case, we can easily determine $\mrho(T)=2 <\frac{n+3s-1}{5}$. If $\deg(v_5) \ge 2$, consider the nontrivial tree $T'=T- T(v_4)$. Its order is $n' \le n-5$. The number of support vertices in $T'$ is either $s-1$ or $s$ depending on whether $\deg(v_5)\ge 3$ or  $\deg(v_5)=2$. For a $\mrho$-set $D'$ of $T'$, the superset $D=D' \cup \{v_2\}$ is always a maximal $2$-packing in $T$. We then conclude
      $$\mrho(T) \leq |D| = |D'|+1 \leq \frac{n'+3s'-1}{5} +1 \leq \frac{n+3s-1}{5}.
      $$
      \end{description}
      As the above cases cover all possibilities and $\mrho(T)$ is an integer, the upper bound follows.
\qed      

\medskip
The upper bound in Theorem~\ref{thm:2-2-lower-upper} is sharp for all nontrivial paths except those of order $5k$. Further sharp examples are obtained by taking graphs $G_p-e_1$ as defined in Section~\ref{sec:spanning-trees} for $p \in \{2,3,4\}$.

As already mentioned, in~\cite{Henning-1998-1} Henning proved that $\mrho(T) \leq \frac{n+3-2\sqrt{n}}{2}$. Since $\frac{n+3s-1}{5} < \frac{n+3-2\sqrt{n}}{2}$ if and only if 
$$s < \frac{n}{2} - \frac{5}{3}\sqrt{n} + \frac{17}{6}\,,$$
the bounds are independent and, moreover, the one of Theorem~\ref{thm:2-2-lower-upper} is sharper in most cases.

%%%%%%%%%%%%%%%%%%%%%%%%%%%%%%%%%%%%%%%
\section{Proof of Theorem~\ref{thm:trees-upper-bound-gamma_d^2}}
\label{sec:proof3}
%%%%%%%%%%%%%%%%%%%%%%%%%%%%%%%%%%%%%%%

For this proof, we recall the following auxiliary result that will be useful to us. 
\begin{lemma} {\rm (Henning, Oellermann and Swart~\cite{Henning1995})}
    \label{lemma:d-distance-dominating-set}
If $G$ is a connected graph with $\rad(G) \ge d \ge 1$, then there exists a smallest $d$-distance dominating set $D$ of $G$ such that for each $v \in D$, there exists a vertex $w \in V(G) \setminus D$ at distance exactly $d$ from $v$ such that $N_d[w] \cap D=\{v\}$.
\end{lemma}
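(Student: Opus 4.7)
\medskip
\noindent\textbf{Proof plan.}
The plan is to prove the lemma via an extremal argument. For a $d$-distance dominating set $D$ and $v \in D$, call $w \in V(G) \setminus D$ an \emph{external private $d$-vertex} of $v$ (with respect to $D$) if $N_d[w] \cap D = \{v\}$. In this language, the conclusion asks for a minimum $d$-distance dominating set $D$ such that every $v \in D$ admits an external private $d$-vertex at distance exactly $d$.

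As a warm-up I would record the standard minimality consequence: in any minimum $d$-distance dominating set $D$, for every $v \in D$ there exists some $w \in V(G)$ with $N_d[w] \cap D = \{v\}$ (otherwise $D \setminus \{v\}$ would already be $d$-dominating, contradicting minimality). However, this $w$ may coincide with $v$ or lie at distance strictly less than $d$ from $v$, so the lemma goes beyond this observation. To upgrade it, among all minimum $d$-distance dominating sets $D$ of $G$, select one maximizing the potential
\begin{equation*}
\Phi(D) \;=\; \sum_{v \in D} \max\bigl\{\, d(v,w) \,:\, w \in V(G) \setminus D,\ N_d[w] \cap D = \{v\}\,\bigr\},
\end{equation*}
where by convention the inner maximum is $0$ if no such $w$ exists. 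Clearly $\Phi(D) \le d\,|D|$, with equality precisely when the desired conclusion holds.

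Assume for contradiction that $\Phi(D) < d\,|D|$ and choose $v \in D$ admitting no external private $d$-vertex at distance exactly $d$. Since $\rad(G) \ge d$, we have $\ecc_G(v) \ge d$, so there exists $y \in V(G)$ with $d(v,y) = d$. By the contrary assumption, $y$ fails to be an external private $d$-vertex of $v$, so either $y \in D$ or there is some $u \in D \setminus \{v\}$ with $d(u,y) \le d$. In each subcase I would perform a local swap along the $v$--$y$ geodesic, replacing an element of $D$ by a vertex strictly further from $v$, and argue that the resulting set $D'$ is again a minimum $d$-distance dominating set with $\Phi(D') > \Phi(D)$, contradicting the choice of $D$. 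The main obstacle is verifying that such a swap can always be arranged so that it (i) preserves minimum cardinality, (ii) still $d$-dominates every previously-privately-dominated vertex, and (iii) does not destroy the already-achieved distance-$d$ private vertices witnessing the contribution of other elements of $D$ to $\Phi(D)$; the hypothesis $\rad(G) \ge d$ is exactly what supplies the geodesic extending to distance $d$ and hence the ``room'' in $G$ needed to carry out the swap cleanly.
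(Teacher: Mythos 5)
First, a point of comparison: the paper does not prove this lemma at all --- it is quoted as an auxiliary result from Henning, Oellermann and Swart~\cite{Henning1995}, so there is no in-paper argument to measure your attempt against. Judged on its own terms, your proposal is a proof \emph{plan} rather than a proof, and the only lemma-specific content --- the swap and the verification that it makes progress --- is exactly what is missing. The extremal setup (maximize $\Phi(D)$ over minimum $d$-distance dominating sets) and the warm-up observation that minimality forces every $v \in D$ to have \emph{some} private vertex are both correct, as is the reduction to showing $\Phi(D) = d\,|D|$ for a maximizer. But the swap is never defined: when $y$ is dominated by some $u \in D \setminus \{v\}$, you do not say which element of $D$ is exchanged for which vertex. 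The natural move --- slide $v$ one step along the $v,y$-geodesic to a neighbor $v'$ --- does preserve domination, because any $w$ with $N_d[w] \cap D = \{v\}$ satisfies $d(v,w) \le d-1$ by your standing assumption on $v$, hence $d(v',w) \le d$; note that this is the one place where the failure of $v$ to have a distance-$d$ private vertex gets used, and your sketch never records it.

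The more serious gap is item (iii), which you name but do not address, and which is the actual content of the lemma. A single swap need not increase $\Phi$: after replacing $v$ by $v'$, the vertex $y$ may still be dominated by $u$, so the term contributed by $v'$ can remain $0$ or small, while $v'$ may simultaneously enter $N_d[w]$ for a vertex $w$ that witnessed the distance-$d$ contribution of some \emph{other} $x \in D$, destroying that privacy and strictly decreasing $\Phi$. So the maximality of $\Phi(D)$ yields no contradiction from one swap, and you have no termination argument for iterating swaps either, since $\Phi$ can oscillate. The hypothesis $\rad(G) \ge d$ supplies the vertex $y$ at distance exactly $d$, as you say, but it does nothing to prevent this ``privacy stealing,'' which is a genuine phenomenon already for $d=1$ (it is the delicate point in the classical Bollob\'as--Cockayne private-neighbor lemma). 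To close the gap one needs a mechanism that protects previously established witnesses --- for instance a more refined or lexicographically ordered potential, or a vertex-by-vertex correction argument --- and as written the proposal contains no such mechanism, so it does not constitute a proof.
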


Recall that Theorem~\ref{thm:trees-upper-bound-gamma_d^2} asserts that if $d \geq 2$ and $T$ is a tree of order $n$, then $\gamma_d^2(T) \leq \frac{n-2\sqrt{n}+d+1}{d}$.

\medskip\noindent 
{\bf Proof of Theorem~\ref{thm:trees-upper-bound-gamma_d^2}}. 
    If $T$ is a tree with $\rad(T)\le d$, then $\gamma_d^2(T)=1 =\frac{1-2\sqrt{1}+d+1}{d} \leq \frac{n-2\sqrt{n}+d+1}{d}$. Hence assume in the rest that $\rad(T)\ge d+1$. Let $D=\{v_1,v_2,\dots,v_b\}$ be a $\gamma_d^0(T)$-set of $T$ satisfying the statement of Lemma~\ref{lemma:d-distance-dominating-set}. For each $i \in [b]$, let
    \begin{align*}
    W_i & =  \{w \in V(T) \setminus D:\ d(v_i,w)=d \text{~and~} N_d[w] \cap D =\{v_i\}\}, \\
    X_i & = \{x \in V(T):\ x \text{~belongs to a~} v_i,w\text{-path of order~} d+1 \text{~for some~} w \in W_i\}, \\
    U_i &  = \{u \in X_i:\ u \in N(v_i)\}, \\
    Z_i &  = \{z \in X_i:\ z \in N_2[v_i]\setminus N[v_i]\}.
\end{align*}	
    By Lemma~\ref{lemma:d-distance-dominating-set}, $W_i,U_i,Z_i \neq \emptyset$. Moreover, $|X_i| \geq d+1$.
	
    \begin{claim}
        \label{claim:disjoint}
	$X_i \cap X_j =\emptyset$ for $1 \leq i < j \leq b$.
    \end{claim}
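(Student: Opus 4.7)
The plan is to argue by contradiction. Suppose there exist indices $i\neq j$ and a vertex $x\in X_i\cap X_j$. By the definition of $X_i$, there is a vertex $w_i\in W_i$ such that $x$ lies on the unique $v_i$--$w_i$ path in $T$, which has length $d$. Likewise, there exists $w_j\in W_j$ with $x$ on the $v_j$--$w_j$ path of length $d$. Since in a tree the length of the unique path decomposes additively through any internal vertex, I have $d(v_i,x)+d(x,w_i)=d$ and $d(v_j,x)+d(x,w_j)=d$.

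Next I exploit the ``privacy'' condition furnished by Lemma~\ref{lemma:d-distance-dominating-set}, which is the whole point of working with $W_i$. Since $w_i\in W_i$, the only element of $D$ in $N_d[w_i]$ is $v_i$, so $d(w_i,v_j)\geq d+1$. The triangle inequality gives $d(w_i,v_j)\leq d(w_i,x)+d(x,v_j)$, and substituting $d(w_i,x)=d-d(v_i,x)$ yields the strict inequality $d(x,v_j)>d(v_i,x)$. By the completely symmetric argument applied to $w_j\in W_j$, using $d(w_j,v_i)\geq d+1$ and $d(w_j,x)=d-d(v_j,x)$, I obtain $d(x,v_i)>d(v_j,x)$.

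Chaining these two inequalities gives $d(x,v_j)>d(v_i,x)>d(v_j,x)$, which is absurd. Hence no such $x$ exists and the claim follows. The main (essentially only) subtlety is to identify that the correct tool is precisely the private-neighbor clause of Lemma~\ref{lemma:d-distance-dominating-set}: it is this condition, and not merely the fact that $D$ is a minimum $d$-distance dominating set, that forces the paths of length $d$ from distinct $v_i$'s to their private witnesses to be internally vertex-disjoint in the tree. Everything else reduces to the elementary fact that distances along a path in a tree add.
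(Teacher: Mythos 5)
Your proof is correct and follows essentially the same route as the paper: both arguments use the private-witness condition $N_d[w_i]\cap D=\{v_i\}$ from Lemma~\ref{lemma:d-distance-dominating-set} together with the additivity of distances along the tree paths through $x$ to reach a contradiction. Your chained strict inequalities simply make explicit the step the paper dismisses as ``easy to see,'' namely that $\{v_i,v_j\}\subseteq N_d[w_i]\cap D$ or $\{v_i,v_j\}\subseteq N_d[w_j]\cap D$ must hold.
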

	
    \proof
         Suppose that there exists a vertex $x \in X_i \cap X_j$ for some $i$ and $j$. Then there exist two vertices $w_i \in W_i$ and $w_j \in W_j$ such that the $v_i,w_i$-path and the $v_j,w_j$-path contain the vertex $x$. It is easy to see that $\{v_i,v_j\} \subseteq N_d[w_i] \cap D$ or $\{v_i,v_j\} \subseteq N_d[w_j] \cap D$, a contradiction.
    \smallqed
\medskip

    Since $D$ $d$-distance dominates $V(T)$, and by Claim \ref{claim:disjoint}, we can partition $V(T)$ into $V_1,\dots,V_b$, where each set $V_i$ induces a tree $T_i$ of radius at most $d$. For each $i \in [b]$, $X_i \subseteq V_i$ and $v_i$ $d$-distance dominates $V_i$. Let $n_i=|V_i|$. Then $n_i \geq d+1$. By the pigeonhole principle, we may assume that $n_1 \geq \frac{n}{b}$. For each $i \in [b]$, we further let 
    \begin{align*}
        W_i' & =  \{w \in V_i:\ d(v_i,w)=d\}, \\
        X_i' & = \{x \in V_i:\ x \text{~belongs to a~} v_i,w\text{-path of order~} d+1 \text{~for some~} w \in W_i'\}, \\
        U_i' &  = \{u \in X_i':\ u \in N(v_i)\}, \\
        Z_i' &  = \{z \in X_i':\ z \in N_2[v_i]\setminus N[v_i]\}.
    \end{align*}

    For each $1 \leq i < j \leq b$, $T_i$ joins $T_j$ by at most one edge in $T$. We can regard each $T_i$ as a vertex $y_i$, and then we get a tree $T'$ of order $b$. Then $T_i$ joins $T_j$ if and only if $y_iy_j \in E(T')$. We also relabel the vertices such that for every $i\in [b]$ the vertices $y_1, \ldots, y_i$ induce a tree.
    
    Consider the tree $T_i$ and root it at $v_i$. For $j \in \{0\} \cup [d]$, let $L_j^i$ be the set of vertices of the $j$-th level of $T_i$. Let $U_i'=\{u_1^i,u_2^i,\dots,u_s^i\}$ where $s \geq 1$. For each $t \in [s]$, let $T_{u_t^i}$ be the tree induced from $T_i$ by the descendants of $u_t^i$ and $\{u_t^i,v_i\}$. Let $z_t^i \in V(T_{u_t^i}) \cap Z_i'$ be a vertex.
    
    Now we will construct a $d$-distance $2$-packing dominating set $S$ of $T$ in $b$ steps. First, let $S_1=\{v_1\}$. For the $i$-th step, suppose that we have got the set $S_{i-1}$ by the following procedure. Let $v \in V(T_{i'})$ for some $i' < i$ and $v$ is joined to some vertex $v'$ of $L_{j}^{i}$. Clearly, there is exactly one such vertex $v$ for $T_i$ and $d(v,v')=1$. Let $d(v',S_{i-1})$ be the shortest distance between $v'$ and the vertices of $S_{i-1}$ in $T$. We build $S_i$ form $S_{i-1}$ as follows:

    \begin{enumerate}[(1)]
    \item If $j=0$, i.e.\ $v'=v_i$, then let
    $$
    S_i'=
    \begin{cases}
	\bigcup_{t \in [s]}\{z_{t}^{i}\}; & d(v_i,S_{i-1})=1 ,\\
	\{u_1^i\} \cup \bigcup_{t \in [s] \setminus \{1\}}\{z_{t}^{i}\}; & d(v_i,S_{i-1})=2,\\
	\{v_i\}; & d(v_i,S_{i-1}) \geq 3.\\
    \end{cases}
    $$ 

    \item If $j=1$, then let
    $$
    S_i'=
    \begin{cases}
	\{u_1^i\} \cup \bigcup_{t \in [s] \setminus \{1\}}\{z_{t}^{i}\}; & d(v',S_{i-1})=1 \text{ and } v' \in N_{T_i}(v_i) \setminus U_i, \\
	\{u_2^i\} \cup \bigcup_{t \in [s] \setminus \{1,2\}}\{z_{t}^{i}\}; & d(v',S_{i-1})=1 \text{ and } v' = u_1^i \footnotemark,\\
	\{v_i\}; & d(v',S_{i-1}) \geq 2.\\
    \end{cases}
    $$ 
    \footnotetext{If $v' \in U_i$, we may without loss of generality assume that $v' = u_1^i$. Note that $S_i'= \emptyset$ is possible in this case.}

    \item If $j \geq 2$, then let $S_{i}'= \{v_i\}$.

    \item For each $j \in \{0\} \cup [d]$, let $S_{i}=S_{i-1} \cup S_i'$.
    \end{enumerate}

    By the procedure, we know that $S=S_b$ is a $d$-distance $2$-packing dominating set, and $|S_i'| \leq \frac{n_i-1}{d}$ for $i \in [b] \setminus \{1\}$. Since $S=\{v_1\} \cup \bigcup_{i \in [b] \setminus \{1\}}S_i'$, $n_1 \geq \frac{n}{b}$, and $b+\frac{n}{b} \geq \sqrt{n}+\frac{n}{\sqrt{n}}=2\sqrt{n}$, we have
    \begin{equation*}
	\begin{aligned}
		\gamma_d^2(T) \leq |S| &\leq 1+\sum_{i \in [b] \setminus \{1\}}\frac{n_i-1}{d}\\
		&\leq 1+\frac{1}{d}(n-\frac{n}{b}-b+1)\\
		&\leq \frac{n-2\sqrt{n}+d+1}{d}.
	\end{aligned}
    \end{equation*}
\qed    

\medskip
The upper bound in~Theorem \ref{thm:trees-upper-bound-gamma_d^2} is best possible as demonstrated by the following example. Let $T$ be the tree obtained from a star $K_{1,ds}~(s \geq 0)$ by attaching $s$ copies of $P_d$ to each vertex of the star. One can see Fig.~\ref{fig:sharp-example-upper-bound-gamma_d^2} for an illustration for this construction.

    \begin{figure}[ht!]
	\begin{center}
		\begin{tikzpicture}[scale=.36]
			\tikzstyle{vertex}=[circle, draw, inner sep=0pt, minimum size=6pt]
			\tikzset{vertexStyle/.append style={rectangle}}
			
			\vertex (1) at (0,0) [scale=1.00,fill=white] {};
%			\node ($u_2$) at (-4,0.7) {$u_2$};
			
                \vertex (2) at (-4,0) [scale=1.00,fill=white] {};

                \vertex (3) at (4,0) [scale=1.00,fill=white] {};

                \vertex (4) at (-4,4) [scale=1.00,fill=white] {};

                \vertex (5) at (-4,-4) [scale=1.00,fill=white] {};

                \vertex (6) at (4,4) [scale=1.00,fill=white] {};

                \vertex (7) at (4,-4) [scale=1.00,fill=white] {};

                \vertex (8) at (-6,5) [scale=1.00,fill=white] {};

                \vertex (9) at (-6,3) [scale=1.00,fill=white] {};

                \vertex (10) at (-8,5) [scale=1.00,fill=white] {};

                \vertex (11) at (-8,3) [scale=1.00,fill=white] {};

                \vertex (12) at (-10,5) [scale=1.00,fill=white] {};

                \vertex (13) at (-10,3) [scale=1.00,fill=white] {};

                \vertex (14) at (-6,1) [scale=1.00,fill=white] {};

                \vertex (15) at (-6,-1) [scale=1.00,fill=white] {};
                
                \vertex (16) at (-8,1) [scale=1.00,fill=white] {};

                \vertex (17) at (-8,-1) [scale=1.00,fill=white] {};

                \vertex (18) at (-10,1) [scale=1.00,fill=white] {};

                \vertex (19) at (-10,-1) [scale=1.00,fill=white] {};

                \vertex (20) at (-6,-3) [scale=1.00,fill=white] {};

                \vertex (21) at (-6,-5) [scale=1.00,fill=white] {};
                
                \vertex (22) at (-8,-3) [scale=1.00,fill=white] {};

                \vertex (23) at (-8,-5) [scale=1.00,fill=white] {};

                \vertex (24) at (-10,-3) [scale=1.00,fill=white] {};

                \vertex (25) at (-10,-5) [scale=1.00,fill=white] {};

                \vertex (26) at (6,-3) [scale=1.00,fill=white] {};

                \vertex (27) at (6,-5) [scale=1.00,fill=white] {};
                
                \vertex (28) at (8,-3) [scale=1.00,fill=white] {};

                \vertex (29) at (8,-5) [scale=1.00,fill=white] {};

                \vertex (30) at (10,-3) [scale=1.00,fill=white] {};

                \vertex (31) at (10,-5) [scale=1.00,fill=white] {};

                \vertex (32) at (6,1) [scale=1.00,fill=white] {};

                \vertex (33) at (6,-1) [scale=1.00,fill=white] {};
                
                \vertex (34) at (8,1) [scale=1.00,fill=white] {};

                \vertex (35) at (8,-1) [scale=1.00,fill=white] {};

                \vertex (36) at (10,1) [scale=1.00,fill=white] {};

                \vertex (37) at (10,-1) [scale=1.00,fill=white] {};

                \vertex (38) at (6,5) [scale=1.00,fill=white] {};

                \vertex (39) at (6,3) [scale=1.00,fill=white] {};

                \vertex (40) at (8,5) [scale=1.00,fill=white] {};

                \vertex (41) at (8,3) [scale=1.00,fill=white] {};

                \vertex (42) at (10,5) [scale=1.00,fill=white] {};

                \vertex (43) at (10,3) [scale=1.00,fill=white] {};

                \vertex (44) at (-1,-3) [scale=1.00,fill=white] {};

                \vertex (45) at (1,-3) [scale=1.00,fill=white] {};

                \vertex (46) at (-1,-5) [scale=1.00,fill=white] {};

                \vertex (47) at (1,-5) [scale=1.00,fill=white] {};

                \vertex (48) at (-1,-7) [scale=1.00,fill=white] {};

                \vertex (49) at (1,-7) [scale=1.00,fill=white] {};
            
			\path
			(1) edge (2)
                (1) edge (3)
                (1) edge (4)
                (1) edge (5)
                (1) edge (6)
                (1) edge (7)
                (4) edge (8)
                (4) edge (9)
                (8) edge (10)
                (10) edge (12)
                (9) edge (11)
                (11) edge (13)
                (2) edge (14)
                (2) edge (15)
                (14) edge (16)
                (16) edge (18)
                (15) edge (17)
                (17) edge (19)
                (5) edge (20)
                (5) edge (21)
                (20) edge (22)
                (22) edge (24)
                (21) edge (23)
                (23) edge (25)
                (7) edge (26)
                (7) edge (27)
                (26) edge (28)
                (28) edge (30)
                (27) edge (29)
                (29) edge (31)
                (3) edge (32)
                (3) edge (33)
                (32) edge (34)
                (34) edge (36)
                (33) edge (35)
                (35) edge (37)
                (6) edge (38)
                (6) edge (39)
                (38) edge (40)
                (40) edge (42)
                (39) edge (41)
                (41) edge (43)
                (1) edge (44)
                (1) edge (45)
                (44) edge (46)
                (46) edge (48)
                (45) edge (47)
                (47) edge (49)
			;
			
		\end{tikzpicture}
		\caption{An example where $s=2$ and $d=3$.}
		\label{fig:sharp-example-upper-bound-gamma_d^2}
	\end{center}
\end{figure}

Note that
\begin{equation*}
    \begin{gathered}
	n=|V(T)|=ds+1+ds(ds+1)=(ds+1)^2,\\
		\gamma_d^2(T)=1+ds^2,
    \end{gathered}
\end{equation*}
and
\begin{equation*}
    \gamma_d^2(T)=1+ds^2=\frac{(ds+1)^2-2(ds+1)+d+1}{d}=\frac{n-2\sqrt{n}+d+1}{d}.
\end{equation*}

Inspired by the result from~\cite{Gimbel-1996} asserting that $\gamma_d^1(G) \leq \frac{n-2\sqrt{n}+d+1}{d}$, and by Theorem \ref{thm:trees-upper-bound-gamma_d^2}, we propose the following conjecture.

\begin{conjecture}
    \label{conj:trees-upper-bound-gamma_d^p}
If $3 \leq p \leq d$ and $T$ is a tree of order $n$, then $\gamma_d^p(T) \leq \frac{n-2\sqrt{n}+d+1}{d}$.
\end{conjecture}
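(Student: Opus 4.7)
The plan is to adapt the proof of Theorem~\ref{thm:trees-upper-bound-gamma_d^2}. If $\rad(T) \le d$, then $\gamma_d^p(T) = 1$ and the bound is immediate; otherwise, invoke Lemma~\ref{lemma:d-distance-dominating-set} to obtain a minimum $d$-distance dominating set $D = \{v_1, \ldots, v_b\}$ with private witnesses $w_i$ at distance exactly $d$. Exactly as in the $p = 2$ proof, this yields the partition $V(T) = V_1 \sqcup \cdots \sqcup V_b$ with $|V_i| \ge d+1$, the long-branch subtrees $T_{u_t^i}$ along the $v_i, w_i$-paths, the quotient tree $T'$, and the relabeling with $n_1 \ge n/b$. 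The set $S$ would be built in $b$ stages with $S_1 = \{v_1\}$, and the final count $|S| \le 1 + (n - n/b - b + 1)/d \le (n - 2\sqrt{n} + d + 1)/d$ would then follow from $b + n/b \ge 2\sqrt{n}$ once we secure $|S_i'| \le (n_i - 1)/d$ at each stage.

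The key modification is the stage-$i$ selection of $S_i' \subseteq V_i$. Writing $v'$ for the unique vertex of $V_i$ adjacent in $T$ to some $V_{i'}$ with $i' < i$, let $\ell$ be the depth of $v'$ in $T_i$ rooted at $v_i$, and set $j = d(v', S_{i-1})$. The depth-$2$ representatives $z_t^i$ of the $p = 2$ construction should be replaced by representatives at depth
$$k = \max\{\lceil (p+1)/2 \rceil,\ p+1 - j - \ell\} \le d,$$
chosen one per long branch of $v_i$ different from the branch containing $v'$. Two such representatives in distinct branches are at distance $2k \ge p+1$ through $v_i$, and $k \ge p+1-j-\ell$ ensures each is at distance $\ge p+1$ from $S_{i-1}$ along the path through $v_i$ and $v'$. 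Since $k \le d$, a representative at depth $k$ in a branch of length $d$ still $d$-dominates its branch; the branch containing $v'$ and any short branches of $v_i$ would be $d$-dominated either by $S_{i-1}$ through $v'$ when $j$ is small, or by $v_i$ itself in the complementary regime $j+\ell \ge p+1$, in which case $v_i$ is free to be added to $S_i'$. As each long branch contributes at least $d$ new vertices to $V_i$, the policy of one representative per long branch respects the budget $s \le (n_i-1)/d$.

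The main obstacle is the delicate case split needed when $j + \ell \le p$: here $v_i$ is forbidden and the representative depth $p+1-j-\ell$ can approach $d$, leaving virtually no slack. For $p = d$ with $\ell = 0$ and $j = 1$, the representatives get pushed to depth exactly $d$, i.e., to the leaves $w_i$, and one must verify non-interference with existing $S_{i-1}$ choices and with the representatives chosen for neighboring $V_{i'}$'s. A secondary difficulty arises when $v_i$ is excluded from $S_i'$ but $S_{i-1}$ is too far from some short branch of $v_i$ to $d$-dominate it; an auxiliary representative must then be placed without exceeding the budget $(n_i-1)/d$, which forces a refined counting when $n_i$ is close to $1 + sd$. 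Verifying these tight invariants across all configurations of $(j, \ell, s)$---a generalization of rules (1)--(3) in the proof of Theorem~\ref{thm:trees-upper-bound-gamma_d^2} with $p+1$ replacing $3$ throughout---will be the technical heart of the argument, and may even require refining the initial choice of $D$ so as to preclude pathological configurations near the frontier between the $V_i$'s.
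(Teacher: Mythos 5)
This statement is labelled a \emph{conjecture} in the paper: the authors offer no proof of it, so there is nothing to compare your attempt against, and a correct complete argument here would be a new result rather than a reconstruction. What you have written is, by your own admission, a plan with unresolved cases rather than a proof, so it cannot be accepted as one.

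Beyond the gaps you flag yourself, there is a concrete false step. You claim that ``since $k \le d$, a representative at depth $k$ in a branch of length $d$ still $d$-dominates its branch.'' This is true only when the branch $T_{u_t^i}$ is a path. In general $T_{u_t^i}$ may contain several $v_i,w$-paths ($w \in W_i'$) meeting only at $u_t^i$; a representative at depth $k$ on one of them is at distance $(k-1)+(d-1)=k+d-2$ from the tip of another, which exceeds $d$ as soon as $k \ge 3$. This is precisely why the paper's construction for $p=2$ works: there $k=2$ and the depth-$2$ vertex $z_t^i$ reaches every vertex of $T_{u_t^i}$ within distance $1+(d-1)=d$. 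For $p \ge 3$ your depth $k \ge \lceil (p+1)/2 \rceil \ge 2$ can be at least $3$, and then you would need one representative per sub-branch below $u_t^i$; but two such representatives meet at $u_t^i$ and are only $2(k-1)$ apart, forcing $k$ even larger to restore the $p$-packing condition, which in turn worsens the domination radius and breaks the budget $|S_i'| \le (n_i-1)/d$ (one representative per long branch no longer suffices). This circular tension between the packing and domination constraints inside a single $V_i$ is the actual obstruction, and your proposal does not resolve it; the case analysis you defer to ``the technical heart of the argument'' is exactly where the conjecture remains open.
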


Note that if Conjecture~\ref{conj:trees-upper-bound-gamma_d^p} is true, then the above example can be used to show that the bound is sharp.

%%%%%%%%%%%%%%%%%%%%%%%%%%%%%%%%%%%%%%%
\section{Spanning trees}
\label{sec:spanning-trees}
%%%%%%%%%%%%%%%%%%%%%%%%%%%%%%%%%%%%%%%

Let $e$ be an edge of a graph $G$. Since $d_{G-e}(u,v) \ge d(u,v)$ for every $u,v\in V(G)$, a packing of $G$ remains a packing in $G-e$. On the other hand, this property in general does not extend to $\mrho(G)$. To see it, consider the graphs $G_{p}$ and $G_{p}'$ ($p\ge 2$) as shown in Fig.~\ref{fig:G1&G2}, where $\deg(u_i) = \deg(v_i) = p+2$ for $i \in [2]$.

\begin{figure}[ht!]
	\begin{center}
		\begin{tikzpicture}[scale=.5]
			\tikzstyle{vertex}=[circle, draw, inner sep=0pt, minimum size=6pt]
			\tikzset{vertexStyle/.append style={rectangle}}

            \draw (-4,0)..controls (-2,-2) and (0,-2) ..(2,0);
            
			\vertex (1) at (-4,0) [scale=1.00,fill=white] {};
			\node ($u_2$) at (-4,0.7) {$u_2$};
			
			\vertex (2) at (-6,2) [scale=1.00,fill=white] {};
			
			\vertex (21) at (-8,2) [scale=1.00,fill=white] {};
			
			\vertex (3) at (-6,0.5) [scale=1.00,fill=white] {};
			
			\vertex (31) at (-8,0.5) [scale=1.00,fill=white] {};
			
			\vertex (3-1) at (-7,-0.5) [scale=0.3,fill=black] {};
			
			\vertex (3-2) at (-7,-0.75) [scale=0.3,fill=black] {};
			
			\vertex (3-3) at (-7,-1) [scale=0.3,fill=black] {};
			
			\vertex (4) at (-6,-2) [scale=1.00,fill=white] {};
			
			\vertex (41) at (-8,-2) [scale=1.00,fill=white] {};
			
			\vertex (5) at (-2,0) [scale=1.00,fill=white] {};
			
			\vertex (6) at (0,0) [scale=1.00,fill=white] {};
			
			\vertex (7) at (2,0) [scale=1.00,fill=white] {};
			\node ($v_2$) at (2,0.7) {$v_2$};
			
			\vertex (8) at (4,2) [scale=1.00,fill=white] {};
			
			\vertex (81) at (6,2) [scale=1.00,fill=white] {};
			
			\vertex (8-1) at (5,-0.5) [scale=0.3,fill=black] {};
			
			\vertex (8-2) at (5,-0.75) [scale=0.3,fill=black] {};
			
			\vertex (8-3) at (5,-1) [scale=0.3,fill=black] {};
			
			\vertex (9) at (4,0.5) [scale=1.00,fill=white] {};
			
			\vertex (91) at (6,0.5) [scale=1.00,fill=white] {};
			
			\vertex (10) at (4,-2) [scale=1.00,fill=white] {};
			
			\vertex (101) at (6,-2) [scale=1.0,fill=white] {};
			
			\node ($e_2$) at (-1,-2) {$e_2$};
			
			\path
			(1) edge (2)
			(1) edge (3)
			(1) edge (4)
			(1) edge (5)
			(5) edge (6)
			(6) edge (7)
			(7) edge (8)
			(7) edge (9)
			(7) edge (10)
			(2) edge (21)
			(3) edge (31)
			(4) edge (41)
		    (8) edge (81)
		    (9) edge (91)
		    (10) edge (101)
			;

			\draw (-18,0)..controls (-17,-2) and (-15,-2) ..(-14,0);
			
			\vertex (11) at (-18,0) [scale=1.00,fill=white] {};
			\node ($u_1$) at (-18,0.7) {$u_1$};
			
			\vertex (12) at (-20,2) [scale=1.00,fill=white] {};
			
			\vertex (121) at (-22,2) [scale=1.00,fill=white] {};
			
			\vertex (13) at (-20,0.5) [scale=1.00,fill=white] {};
			
			\vertex (131) at (-22,0.5) [scale=1.00,fill=white] {};
			
			\vertex (13-1) at (-21,-0.5) [scale=0.3,fill=black] {};
			
			\vertex (13-2) at (-21,-0.75) [scale=0.3,fill=black] {};
			
			\vertex (13-3) at (-21,-1) [scale=0.3,fill=black] {};
			
			\vertex (14) at (-20,-2) [scale=1.00,fill=white] {};
			
			\vertex (141) at (-22,-2) [scale=1.00,fill=white] {};
			
			\vertex (15) at (-16,0) [scale=1.00,fill=white] {};
			
			\vertex (16) at (-14,0) [scale=1.00,fill=white] {};
			\node ($v_1$) at (-14,0.7) {$v_1$};
			
			\vertex (18) at (-12,2) [scale=1.00,fill=white] {};
			
			\vertex (18-1) at (-12,-0.5) [scale=0.3,fill=black] {};
			
			\vertex (18-2) at (-12,-0.75) [scale=0.3,fill=black] {};
			
			\vertex (18-3) at (-12,-1) [scale=0.3,fill=black] {};
			
			\vertex (19) at (-12,0.5) [scale=1.00,fill=white] {};
			
			\vertex (20) at (-12,-2) [scale=1.00,fill=white] {};
			
			\node ($e_1$) at (-16,-2) {$e_1$};
			
			\path
			(11) edge (12)
			(11) edge (13)
			(11) edge (14)
			(11) edge (15)
			(15) edge (16)
			(16) edge (18)
			(16) edge (19)
			(16) edge (20)
			(12) edge (121)
			(13) edge (131)
			(14) edge (141)
			;

		\end{tikzpicture}
		\caption{The graphs $G_p$ (left) and $G_p'$ (right).}
		\label{fig:G1&G2}
	\end{center}
\end{figure}

Note that $\ecc_{G_p}(u_1) = 2$, hence $\mrho(G_p)=1$. Since $\mrho(G_p-e_1)=1+p$, this example shows that $\mrho$ can increase arbitrary by removing an edge. On the other hand, $\{u_2, v_2\}$ is a $\mrho$-set of $G_p'-e$, so $\mrho(G_p'-e_2)=2$, but $\mrho(G_p') =  1+p$, hence $\mrho$ can also decrease arbitrary by removing an edge.

The sequential removal of edges can therefore give rise to a non-monotonic sequence of $\mrho$ of associated graphs. Nevertheless, we have the following monotonicity result. 

\begin{proposition}
\label{prop:spanning-trees}
Every connected graph $G$ contains a spanning tree $T$ such that $\mrho(T) \leq \mrho(G)$.
\end{proposition}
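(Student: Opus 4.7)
The plan is to fix a $\gamma_2^2$-set $D$ of $G$ and construct a spanning tree $T$ of $G$ in which $D$ remains both a $2$-packing and a $2$-distance dominating set; this immediately gives $\gamma_2^2(T) \leq |D| = \gamma_2^2(G)$. Since $d_T(u,v) \geq d_G(u,v) \geq 3$ for any two distinct $u, v \in D$ and any spanning tree $T$ of $G$, the $2$-packing property is automatically inherited from $G$ by $T$; the real task is to control the domination side and ensure that every vertex outside $D$ stays within distance $2$ of $D$ in $T$.

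To build $T$, first run a simultaneous breadth-first search from all vertices of $D$ (equivalently, BFS from an auxiliary super-source adjacent to every element of $D$, then delete that super-source) to obtain a spanning forest $F$ of $G$. Each component of $F$ is a tree rooted at a vertex of $D$, and by the standard BFS property every vertex $v$ satisfies $d_F(v, D) = d_G(v, D) \leq 2$, with the final inequality holding because $D$ is $2$-distance dominating in $G$. Since $G$ is connected, contracting each component of $F$ to a single vertex yields a connected graph on $|D|$ vertices, so one can choose $|D|-1$ edges of $G$ lying between distinct components of $F$ whose addition extends $F$ to a spanning tree $T$ of $G$.

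It remains to verify that $D$ is a $2$-distance $2$-packing dominating set in $T$. Monotonicity of distances yields the packing condition as explained above. For the domination part, fix $v \in V(G) \setminus D$ and let $u \in D$ be the root of the component of $F$ containing $v$; the unique path from $v$ to $u$ in $F$ has length $d_G(v,D) \leq 2$ and remains a path in $T$, so $d_T(v,u) \leq 2$. Therefore $\gamma_2^2(T) \leq |D| = \gamma_2^2(G)$, as required. There is no genuine obstacle here; the only subtle point is the standard fact that BFS preserves distances to the source set, which is precisely what transfers the bound $d_G(v,D) \leq 2$ from $G$ into $T$.
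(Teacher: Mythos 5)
Your proposal is correct and is essentially the paper's own argument: the paper builds the same shortest-path forest rooted at the $\gamma_2^2$-set by hand (first attaching the neighbors of $S$, then the vertices at distance two via a middle vertex already in the forest) and then adds $|S|-1$ edges to connect the components, which is exactly your multi-source BFS construction. Both proofs rely on the same two observations, namely that distances cannot decrease in a spanning subgraph (preserving the $2$-packing property) and that the BFS/shortest-path forest preserves the distance of every vertex to $D$ (preserving $2$-distance domination).
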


\begin{proof}
Let $S = \{s_1, \ldots, s_{\mrho(G)}\}$ be a $\mrho$-set of $G$. Let $S' = \{v\in V(G)\setminus S:\ N(v)\cap S \ne \emptyset\}$, and let $S'' = \{v\in V(G)\setminus S:\ N(v)\cap S = \emptyset\}$. Then $V(G) = S \cup S'\cup S''$.

We are going to construct a required spanning tree $T$ as follows. First add all the vertices of $S$ to $T$. Consider next the vertices from $S'$. Then, by definition, each $v\in S'$ is adjacent to some vertex $s_i\in S$, and, moreover, since $S$ is a $2$-packing, $N(v) \cap S = \{s_i\}$. We now add the edge $vs_i$ to $T$. After this is done for all the vertices of $S'$, the so far constructed $T$ is a disjoint union of stars with the vertices of $S$ being the centers of the stars. Consider finally the vertices from $S''$. If $v\in S''$, then since $S$ is a maximal $2$-packing, there exists a vertex $s_j$ such that $d(v,s_j) = 2$. There could be more vertices in $S$ at distance two from $v$, but we select and fix $s_j$. Let $v, u, s_j$ be a shortest $v,s_j$-path and note that $u$ has already been added to $T$ while considering the set $S'$. We now add the edge $vu$ to $T$. We do this procedure for each of the vertices of $S''$ one by one.

After the above described procedure is finished, the so far constructed $T$ forms a spanning forest of $G$ consisting of $\mrho(G)$ components (where each component contains exactly one vertex from $S$). We complete the construction of $T$ by adding $\mrho(G)-1$ additional edges of $G$ to obtain the final spanning tree $T$. Note that in this way, having in mind that $S$ is a $2$-packing in $G$, the set $S$ is also a $2$-packing in $T$. Moreover, by the construction, for each vertex $v\in V(T)$ there exists a vertex $s_i\in S$ such that $d_T(v,s_i) \le 2$. We can conclude that $S$ is a maximal $2$-packing of $T$ and thus $\mrho(T) \le |S| = \mrho(G)$.
\end{proof}

Related to Proposition~\ref{prop:spanning-trees}, we recall that every connected graph contains a spanning tree with the same domination number, see~\cite[Exercise 10.14]{Imrich-2008}, and that the same holds for the total domination number, see~\cite[Lemma 4.41]{Haynes-2023} and, more generally, for the $d$-distance domination number $\gamma_d^0(G)$ for every $d \ge 1$, see~\cite[Lemma 2.1]{Davila-2017}. To see that we cannot derive this conclusion for $\mrho$ consider the graph $H_p$, $p\ge 2$, as shown in Fig.~\ref{fig:spanning-tree}, where $\deg(v_i)=p+2$ for $i \in [3]$.

\begin{figure}[ht!]
	\begin{center}
		\begin{tikzpicture}[scale=.45]
			\tikzstyle{vertex}=[circle, draw, inner sep=0pt, minimum size=6pt]
			\tikzset{vertexStyle/.append style={rectangle}}
			
			\vertex (1) at (-4,0) [scale=1.00,fill=white] {};
%			\node ($u_2$) at (-4,0.7) {$u_2$};
			
			\vertex (2) at (-2,2) [scale=1.00,fill=white] {};
			
			\vertex (3) at (-2,-2) [scale=1.00,fill=white] {};
			
			\vertex (4) at (2,2) [scale=1.00,fill=white] {};
			
			\vertex (5) at (2,-2) [scale=1.00,fill=white] {};
			
			\vertex (6) at (4,0) [scale=1.00,fill=white] {};
			
			\vertex (7) at (-6,1.5) [scale=1.00,fill=white] {};
			
			\vertex (8) at (-6,0.25) [scale=1.00,fill=white] {};
			
			\vertex (9) at (-6,-1.5) [scale=1.00,fill=white] {};
			
			\vertex (10) at (-8,1.5) [scale=1.00,fill=white] {};
			
			\vertex (11) at (-8,0.25) [scale=1.00,fill=white] {};
			
			\vertex (12) at (-8,-1.5) [scale=1.00,fill=white] {};
			
			\vertex (10-1) at (-7,-0.3) [scale=0.3,fill=black] {};
			
			\vertex (11-1) at (-7,-0.6) [scale=0.3,fill=black] {};
			
			\vertex (12-1) at (-7,-0.9) [scale=0.3,fill=black] {};

			\vertex (13) at (2,4) [scale=1.00,fill=white] {};

            \vertex (14) at (2.8,3.2) [scale=1.00,fill=white] {};

            \vertex (15) at (4,2) [scale=1.00,fill=white] {};

            \vertex (16) at (3.5,5.5) [scale=1.00,fill=white] {};

            \vertex (17) at (4.3,4.7) [scale=1.00,fill=white] {};

            \vertex (18) at (5.5,3.5) [scale=1.00,fill=white] {};
            
            \vertex (16-1) at (4,3.5) [scale=0.3,fill=black] {};
            
            \vertex (17-1) at (4.2,3.3) [scale=0.3,fill=black] {};
            
            \vertex (18-1) at (4.4,3.1) [scale=0.3,fill=black] {};

            \vertex (19) at (2,-4) [scale=1.00,fill=white] {};
            
            \vertex (20) at (3.2,-2.8) [scale=1.00,fill=white] {};
            
            \vertex (21) at (4,-2) [scale=1.00,fill=white] {};
            
            \vertex (22) at (3.5,-5.5) [scale=1.00,fill=white] {};
            
            \vertex (23) at (4.7,-4.3) [scale=1.00,fill=white] {};
            
            \vertex (24) at (5.5,-3.5) [scale=1.00,fill=white] {};
            
            \vertex (22-1) at (3.5,-4) [scale=0.3,fill=black] {};
            
            \vertex (23-1) at (3.3,-4.2) [scale=0.3,fill=black] {};
            
            \vertex (24-1) at (3.1,-4.4) [scale=0.3,fill=black] {};	

            \node ($e$) at (-0.3,1.5) {$e$};

            \node ($v_1$) at (-3.1,0) {$v_1$};

            \node ($v_2$) at (1.8,1.3) {$v_2$};

            \node ($v_3$) at (1.8,-1.3) {$v_3$};
            
			\path
			(1) edge (2)
			(1) edge (3)
			(2) edge (4)
			(3) edge (5)
			(6) edge (4)
			(6) edge (5)
			(1) edge (7)
			(1) edge (8)
			(1) edge (9)
			(7) edge (10)
			(8) edge (11)
			(9) edge (12)
			(4) edge (13)
			(4) edge (14)
			(4) edge (15)
			(13) edge (16)
			(14) edge (17)
			(15) edge (18)
			(5) edge (19)
			(5) edge (20)
			(5) edge (21)
			(19) edge (22)
			(20) edge (23)
			(21) edge (24)
			;
			
		\end{tikzpicture}
		\caption{The graph $H_p$ with $\mrho(T) < \mrho(H_p)$ for each spanning tree $T$ of $H_p$.}
		\label{fig:spanning-tree}
	\end{center}
\end{figure}

If a maximal packing $S$ of $H_p$ contains a vertex $v_i$, then $S\cap\{v_1, v_2, v_3 \} = \{v_i\}$. From this fact we can deduce that $\mrho(H_p) = 1 + 2p$. The graph $H_p$ contains exactly six spanning trees, by the symmetry we may consider the spanning tree $H_p-e$, where $e$ is the edge as in Fig.~\ref{fig:spanning-tree}. Then $d_{H_p-e}(v_1, v_2) = 4$ and the set containing $v_1$, $v_2$, and the leaves that are at distance two from $v_3$, is a $\mrho$-set of $H_p-e$, so that $\mrho(G-e)=p+2$. Hence, since $p\ge 2$, we can conclude that $\mrho(T) < \mrho(H_p)$ for every spanning tree $T$ of $H_p$. 

\section*{Acknowledgements}

We would like to express our sincere gratitude to Daniel Smertnig for discussions on the case of cycles. In particular, he  directed us to the article~\cite{Garcia} leading in turn to Remark~\ref{rem:Daniel}.
Csilla Bujt\'{a}s, Vesna Ir\v{s}i\v{c}, and Sandi Klav\v{z}ar were supported by the Slovenian Research and Innovation Agency (ARIS) under the grants P1-0297, N1-0285, N1-0355, and  Z1-50003.  Vesna Ir\v{s}i\v{c} also acknowledges the financial support from the European Union (ERC, KARST, 101071836).

%%%%%%%%%%%%%%%%%%%%%%%%%%%%
\section*{Declaration of interests}
%%%%%%%%%%%%%%%%%%%%%%%%%%%%
 
The authors declare that they have no conflict of interest. 

%%%%%%%%%%%%%%%%%%%%%%%%%%%%
\section*{Data availability}
%%%%%%%%%%%%%%%%%%%%%%%%%%%%
 
Our manuscript has no associated data.

\end{document}